\newcommand{\CC}{{\mathbb{C}}}
\newcommand{\FF}{{\mathbb{F}}}
\newcommand{\fA}{{\mathfrak{A}}}
\newcommand{\fS}{{\mathfrak{S}}}
\newcommand{\bG}{{\mathbf{G}}}
\newcommand{\bL}{{\mathbf{L}}}
\newcommand{\bT}{{\mathbf{T}}}
\newcommand{\cE}{{\mathcal{E}}}
\newcommand{\Irr}{{\operatorname{Irr}}}
\newcommand{\Syl}{{\operatorname{Syl}}}
\newcommand{\SC}{{\operatorname{sc}}}
\newcommand{\GL}{{\operatorname{GL}}}
\newcommand{\PGL}{{\operatorname{PGL}}}
\newcommand{\PSL}{{\operatorname{L}}}
\newcommand{\SL}{{\operatorname{SL}}}
\newcommand{\SU}{{\operatorname{SU}}}
\newcommand{\tw}[1]{{}^#1\!}
\newcommand{\Ph}[1]{\Phi_#1}
\let\la=\lambda
\newtheorem{thm}{Theorem}[section]
\newtheorem{lem}[thm]{Lemma}
\newtheorem{cor}[thm]{Corollary}
\newtheorem{prop}[thm]{Proposition}
\newtheorem*{conjA}{Conjecture A}
\newtheorem*{thmB}{Theorem B}
\newtheorem*{conjC}{Conjecture C}
\theoremstyle{definition}
\theoremstyle{remark}
\begin{document}

\title[The Projective Height Zero Conjecture]{The Projective Height Zero Conjecture}

\date{\today}

\author{Gunter Malle}
\address{FB Mathematik, TU Kaiserslautern, Postfach 3049,
         67653 Kaisers\-lautern, Germany.}
\email{malle@mathematik.uni-kl.de}
\author{Gabriel Navarro}
\address{Department of Mathematics, Universitat de Val\`encia, 46100 Burjassot,
        Spain.}
\email{gabriel@uv.es}

\thanks{The first author gratefully acknowledges financial support
by SFB TRR 195. The second author is partially supported by the
Spanish Ministerio de Educaci\'on y Ciencia Proyectos  MTM2016-76196-P  and
Prometeo II/Generalitat Valenciana.}

\keywords{height zero conjecture, blocks, projective, quasi-simple groups}

\subjclass[2010]{20C15, 20C25, 20C33}

\begin{abstract}
We propose a projective version of the celebrated Brauer's Height Zero
Conjecture on characters of finite groups and prove it, among other cases,
for $p$-solvable groups as well as for (some) quasi-simple groups.
\end{abstract}

\maketitle


\section{Introduction} \label{sec:intro}

Let $G$ be a finite group and let $p$ be a prime. Recent investigations of
N.~Rizo and the second author \cite{NR} on blocks relative to characters of
normal subgroups lead us to
propose Conjecture~A below. Recall that if $B$ is a Brauer $p$-block of $G$,
we denote by $\Irr(B)$ the irreducible complex characters in $B$.
If $Z$ is a normal subgroup of $G$ and $\lambda\in\Irr(Z)$, then
$\Irr(B|\lambda)$ is the set of $\chi \in \Irr(B)$ such that
$\lambda$ is a constituent of the restriction $\chi_Z$. Furthermore,
$\Irr_0(B|\la)$ denotes the set of characters in $\Irr(B|\lambda)$ of height~0.

\begin{conjA}   \label{conj:projBHZ}
 Let $G$ be a finite group, $p$ a prime, and $B$ be a $p$-block of $G$ with
 defect group $D$. Suppose that $Z\le G$ is a central $p$-subgroup of $G$, and
 let $\la\in\Irr(Z)$. Then:
 $$\Irr(B|\la)=\Irr_0(B|\la)\qquad\Longleftrightarrow\qquad
   \text{$D/Z$ is abelian and $\la$ extends to $D$}.$$
\end{conjA}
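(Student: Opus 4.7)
The natural first step is to recast $\Irr(B|\la)$ in the language of projective representations. Since $Z$ is central in $G$, the set $\Irr(G|\la)$ is in canonical bijection with the irreducible $\alpha$-projective characters of $G/Z$, where $\alpha\in H^2(G/Z,\CC^\times)$ is the $2$-cocycle associated to the central extension $1\to Z\to G\to G/Z\to 1$ and to $\la$. Under this identification the condition ``$\la$ extends to $D$'' translates exactly into ``$\alpha$ is a coboundary on $D/Z$'', so that the projective block of $G/Z$ corresponding to $B$ above $\la$ has defect group $D/Z$ and, precisely in this case, is liftable to an ordinary block of a suitable central extension of $G/Z$. Since $\la$ is linear, this correspondence preserves character degrees and hence heights.

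For the ($\Leftarrow$) direction, assume $D/Z$ is abelian and fix an extension $\hat\la\in\Irr(D)$ of $\la$. The aim is to use $\hat\la$ to produce, via a character-triple or Morita-equivalence argument (or equivalently by passing to a central extension of $G$ in which $\la$ globally lifts), a height-preserving bijection from $\Irr(B|\la)$ onto $\Irr(\bar B)$ for an \emph{ordinary} block $\bar B$ with abelian defect group $D/Z$. The Kessar--Malle theorem (the abelian case of Brauer's Height Zero Conjecture) then gives $\Irr(\bar B)=\Irr_0(\bar B)$, and pulling back yields $\Irr(B|\la)=\Irr_0(B|\la)$.

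The ($\Rightarrow$) direction is the substantive one. The plan is to adapt the Navarro--Sp\"ath reduction of BHZ to this projective setting: by induction on $|G|$, Fong--Reynolds together with block-preserving character-triple isomorphisms should let one descend past normal subgroups and reduce to an almost quasi-simple configuration with $Z\leq Z(G)$, and the conjecture is then to be verified quasi-simple group by quasi-simple group via the classification and Deligne--Lusztig theory. The main obstacle will be exactly this quasi-simple verification: whenever $D$ is non-abelian or $\la$ does not extend to $D$, one must exhibit a character of positive height in $\Irr(B|\la)$, uniformly across the families of quasi-simple groups of Lie type in non-defining characteristic as well as for the exceptional covers of alternating and sporadic groups. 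The delicate interplay between the extensibility of $\la$ to $D$ and the degree formulas for Lusztig characters is presumably what restricts the paper to proving the conjecture in full only in the $p$-solvable case and for only some quasi-simple groups.
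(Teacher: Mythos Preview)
Your proposal sketches a strategy for the full conjecture, but the paper does not prove Conjecture~A in general; it establishes only the ``if'' direction unconditionally, together with the ``only if'' direction for $p$-solvable groups, nilpotent blocks, and (minimality for) quasi-simple groups. So the comparison is really between your sketch and the paper's partial results.

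For the ($\Leftarrow$) direction your route is more elaborate than necessary and has a soft spot. You propose to build, from an extension $\hat\la\in\Irr(D)$, a height-preserving bijection $\Irr(B|\la)\to\Irr(\bar B)$ with $\bar B$ an ordinary block of defect $D/Z$, ``equivalently by passing to a central extension of $G$ in which $\la$ globally lifts''. But $\la$ extending to $D$ does \emph{not} in general imply that $\la$ extends to $G$ (this only follows when $D\in\Syl_p(G)$, which is exactly how the paper argues part~(c)), and character-triple isomorphisms do not automatically carry $B$ to a block with defect group $D/Z$. The paper avoids all of this with a one-line observation: after reducing to $\la$ faithful (part~(a)), if $D/Z$ is abelian and $\la$ extends to $\nu\in\Irr(D)$ then $D'\le Z\cap\ker\nu=\ker\la=1$, so $D$ itself is abelian, and Kessar--Malle applied to $B$ directly finishes. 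No projective-to-ordinary transfer is needed.

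For the ($\Rightarrow$) direction, two remarks. First, you do not mention Murai's theorem, which already gives the ``$\la$ extends to $D$'' half of the conclusion as soon as $\Irr_0(B|\la)\ne\emptyset$; only ``$D/Z$ abelian'' is genuinely new. Second, and more seriously, your plan to ``adapt the Navarro--Sp\"ath reduction of BHZ to this projective setting'' is precisely what the paper says is \emph{not} known: see the final paragraph of the introduction, where such a reduction is flagged as an open problem raised by Sp\"ath. The paper therefore does not attempt a general reduction; instead it treats $p$-solvable groups via Fong reduction to the maximal-defect case (where $\la$ does extend to $G$ and one can twist by $\mu^{-1}$ to land in ordinary BHZ for $G/Z$), nilpotent blocks via the Brou\'e--Puig $*$-construction, and quasi-simple groups by exhibiting explicit positive-height characters in the relevant Lusztig series. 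Your outline correctly anticipates the quasi-simple case analysis, but the reduction step you rely on to reach it is currently a gap, not a known theorem.
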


Of course, when $Z=1$ then Conjecture~A is Brauer's Height Zero Conjecture.
In fact, as we shall show, the ``\emph{if}'' direction of Conjecture A follows
from the Height Zero Conjecture, and therefore  it is true by the main result
of Kessar--Malle \cite{KM13}. (Indeed, if $\la\in\Irr(Z)$ is faithful then the
condition that $D/Z$ is abelian and $\la$ extends to $D$ is equivalent to
$D$ being abelian.) The ``\emph{only if}'' direction, however, does
not seem to be implied by neither the Height Zero Conjecture nor the inductive
Alperin--McKay condition (which as we know now implies the Height Zero
Conjecture \cite{NS,KM17}). It is a (non-trivial) theorem of M.~Murai \cite{MM}
that if some $\chi \in \Irr(B|\lambda)$ has height zero, then $\lambda$
extends to $D$. Hence, the essential new part of our proposed conjecture is
that if all $\Irr(B|\lambda)$ have height zero, then $D/Z$ is abelian.

C.~Eaton has asked us if Conjecture A could still hold only assuming that
$\lambda$ is a $G$-invariant character of a normal $p$-subgroup $Z$ of $G$
and that all characters in $\Irr(B|\theta)$ have the same height as $\theta$,
as happens with Dade's Projective Conjecture. As a matter a fact, we believe
that this is the case for the ``\emph{only if}'' direction, but not for
the ``\emph{if}'' direction.

Our main results on Conjecture~A can be summarised as follows:
\newpage
\begin{thmB}
 \vbox{\  }
 \begin{enumerate}
  \item[\rm(a)] The ``\emph{if}" direction of Conjecture A is true.
  \item[\rm(b)] Conjecture~A is true for $p$-solvable groups as well as for
   nilpotent blocks.
  \item[\rm(c)] Quasi-simple groups do not provide minimal counterexamples to
   Conjecture A.
 \end{enumerate}
\end{thmB}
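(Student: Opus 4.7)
For part (a), the plan is to pass to the quotient by $K:=\ker(\la)$ and reduce to the case of a faithful central character; the equality $\Irr(B|\la)=\Irr_0(B|\la)$ is trivial when $\Irr(B|\la)=\emptyset$. Since $Z$ is a central $p$-subgroup, $Z\le O_p(G)\le D$, so $K\le D$ is normal in $G$, and in the non-trivial case $\la$ is the central character of $B$ on $Z$, so $K$ lies in $\ker\chi$ for every $\chi\in\Irr(B)$. The block domination $B\to\bar B$ in $\bar G=G/K$ therefore gives a height-preserving bijection $\Irr(B|\la)\to\Irr(\bar B|\bar\la)$ with defect group $\bar D=D/K$; moreover $\bar D/\bar Z=D/Z$ is abelian and $\bar\la$ still extends to $\bar D$. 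With $\bar\la$ faithful, any extension of $\bar\la$ to $\bar D$ is linear, so $\bar D'\cap\bar Z\le\ker(\bar\la)=1$; combined with $\bar D'\le\bar Z$ this forces $\bar D'=1$. Brauer's Height Zero Conjecture, proved in \cite{KM13,KM17}, then yields $\Irr(\bar B)=\Irr_0(\bar B)$, which pulls back to $\Irr(B|\la)=\Irr_0(B|\la)$.

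For part (b), I would first note that because $Z$ is central all characters of $B$ share a common $Z$-central character, so $\Irr(B|\la)$ is either empty or equal to $\Irr(B)$; I work in the non-trivial case. The ``\emph{if}'' direction is part~(a). For ``\emph{only if}'' when $G$ is $p$-solvable, Murai's theorem \cite{MM} gives that $\la$ extends to $D$ as soon as a single character of $\Irr(B|\la)$ has height zero; passing to $\bar G=G/\ker(\la)$ as in (a) turns the hypothesis into $\Irr(\bar B)=\Irr_0(\bar B)$, and the Gluck--Wolf theorem (the Height Zero Conjecture for $p$-solvable groups) then delivers that $\bar D=D/\ker(\la)$ is abelian, hence so is $D/Z$. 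For a nilpotent block I would invoke Puig's structure theorem: the height-preserving bijection $\Irr(B)\leftrightarrow\Irr(D)$ is compatible with central characters on $Z$, reducing the statement to a pure $p$-group calculation, and Gallagher's theorem inside $D$ then shows that $\Irr(D|\la)$ consists of linear characters precisely when $\la$ extends to $D$ and $D/Z$ is abelian.

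For part (c), the plan is to invoke the Classification of Finite Simple Groups and verify the relevant inductive criterion for quasi-simple groups family by family. The meaning of ``does not provide a minimal counterexample'' will be pinned down by a technical reduction internal to the paper, after which, for each quasi-simple $G$, one must control the height-zero characters in every $p$-block covering a given central linear character $\la$. Alternating and sporadic groups are handled by direct inspection of character tables and known block data. The substantial case is that of quasi-simple groups of Lie type: there I would use Jordan decomposition of characters together with $d$-Harish-Chandra theory to reduce to statements about $e$-cuspidal pairs and the action of the relative Weyl groups, while tracking $\la$ through the isogeny type of the underlying algebraic group.

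The principal obstacle is clearly part (c). Part (a) is essentially a bookkeeping reduction to Brauer's Height Zero Conjecture, and part (b) is powered by the Gluck--Wolf and Puig theorems. By contrast, part (c) requires a clean formulation of ``minimal counterexample'' for this conjecture---which, as the introduction emphasises, is not supplied by the inductive Alperin--McKay condition---together with a uniform treatment of the Lie-type case in which the interplay between $\la$ and the defect group modulo $Z$ must be followed through Jordan decomposition across all types and all primes.
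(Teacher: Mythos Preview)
Your treatment of part~(a) is essentially the paper's: reduce to $\lambda$ faithful by passing to $G/\ker(\lambda)$, observe that a linear extension of a faithful $\lambda$ to $D$ forces $D'\le\ker(\lambda)=1$, and invoke \cite{KM13}. Your outline for part~(c) is vague but aligned with the paper's case-by-case CFSG analysis via Lusztig series and Jordan decomposition; you correctly identify this as the hard part.

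There is, however, a genuine error at the start of part~(b). Your claim that ``because $Z$ is central all characters of $B$ share a common $Z$-central character, so $\Irr(B|\lambda)$ is either empty or equal to $\Irr(B)$'' is false. Characters in the same $p$-block have central characters agreeing \emph{modulo~$p$}, but since $Z$ is a $p$-group all values $\lambda(z)$ are $p$-power roots of unity and hence all congruent to~$1$; the block condition imposes no constraint whatsoever on which $\lambda\in\Irr(Z)$ a given $\chi\in\Irr(B)$ lies over. (Already for $G=Z=C_p$ the unique block meets every $\Irr(B|\lambda)$ in a singleton.) Consequently, after your reduction to faithful $\bar\lambda$ the hypothesis is still only $\Irr(\bar B|\bar\lambda)=\Irr_0(\bar B|\bar\lambda)$, \emph{not} $\Irr(\bar B)=\Irr_0(\bar B)$, and you cannot feed this directly into Gluck--Wolf.

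The paper closes this gap with two extra steps you omit. First, Fong--Reynolds and Fong reduction (\cite[Thms.~9.14, 10.20]{Na}) bring the $p$-solvable case down to $D\in\Syl_p(G)$. Second, and crucially, once $\lambda$ extends to $D$ (by Murai) and $D$ is Sylow, $\lambda$ extends to all of $G$ (\cite[6.26--6.27]{Is}); tensoring $\Irr(B)$ by the inverse of such an extension produces a block $B_1$ with $\Irr(B_1|1)=\Irr_0(B_1|1)$, i.e.\ the block $\bar B_1$ of $G/Z$ has only height-zero characters, and \emph{now} Gluck--Wolf gives $D/Z$ abelian. Your nilpotent-block argument via the Brou\'e--Puig isometry and Gallagher is correct and matches the paper's, since that bijection genuinely does respect the decomposition over $\Irr(Z)$.
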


After studying Conjecture~A, it is tempting to conjecture that all characters
in $\Irr(D|\lambda)$ have the same degree implies that all characters in
$\Irr(B|\lambda)$ have the same height (and vice versa). However, the principal
2-block of the double cover $G$ of the alternating group $\fA_8$ provides an
interesting counterexample:
the non-trivial irreducible character $\lambda$ of $Z(G)$ is fully ramified
over $P \in \Syl_2(G)$, that is $\lambda^P =8\theta$ for some irreducible
$\theta \in \Irr(P)$, while there are characters in $B$ over $\lambda$ of
degrees 8, 24 and~48. In all the cases that we have checked, however, the
minimum non-zero height of the characters in $\Irr(B|\lambda)$ does coincide
with the minimum non-linear degree of the characters in $\Irr(D|\lambda)$.
It seems interesting to discuss if the Eaton--Moret\'o conjecture might have
a projective version.
\medskip

Our paper is built up as follows. In Section 2, we prove certain cases of
Conjecture~A. In Section 3, we complete the proof of Theorem~B by considering
quasi-simple groups.

Britta Sp\"ath has raised the possibility that it might
be possible to reduce Conjecture~A to the inductive Alperin--McKay conjecture
and a checking of Conjecture~A for quasi-simple groups only; this remains
open for the time being.

\medskip
\noindent{\bf Acknowledgement:} We would like to thank Radha Kessar for
asking about nilpotent blocks, and Charles Eaton for asking about the case
of arbitrary normal $p$-subgroups.

\section{Certain cases where Conjecture A holds}

\begin{thm}   \label{thm:p-solvable}
 Let $G$ be a finite group and let $p$ be a prime.
 Let $Z$ be a central $p$-subgroup of $G$ and $\lambda \in \Irr(Z)$.
 Assume that $B$ is a $p$-block of $G$ with defect group $D$.
 \begin{enumerate}
  \item[\rm(a)] If $(G,B)$ is a counterexample minimising $|G|$, to any of the
   two directions of Conjecture A, then $\lambda$ is faithful.
  \item[\rm(b)] The ``\emph{if}" direction of Conjecture A is true.
  \item[\rm(c)] If $D \in \Syl_p(G)$ then Conjecture~A holds if Brauer's
   Height Zero Conjecture holds for $G/Z$.
  \item[\rm(d)] Conjecture A is true for $p$-solvable groups.
  \item[\rm(e)] Conjecture A is true for nilpotent blocks.
 \end{enumerate}
\end{thm}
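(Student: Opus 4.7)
The plan is to prove the five items in order, since each builds on the previous. Part (a) is a standard quotient reduction that feeds into the rest; (b) and (c) are short arguments combining (a) with known results; (d) uses (c) plus a Fong reduction; and (e) uses the Brou\'e--Puig parametrisation of nilpotent blocks.

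For (a), I set $Z_0 := \ker\la \le Z(G)$. Every $\chi \in \Irr(B|\la)$ has $Z_0 \le \ker\chi$, and the standard bijection between $p$-blocks of $G$ whose defect groups contain $Z_0$ and $p$-blocks of $\bar G := G/Z_0$ sends $B$ to a block $\bar B$ with defect group $\bar D = D/Z_0$. Heights are preserved, and the Conjecture~A data transfer verbatim to $(\bar G,\bar B,Z/Z_0,\bar\la)$ with $\bar\la$ faithful. If $Z_0 \ne 1$ this is a smaller counterexample, contradicting minimality.

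For (b) and (c) I may take $\la$ faithful by (a). As noted in the introduction, ``$D/Z$ abelian and $\la$ extends to $D$'' is then equivalent to $D$ being abelian: any extension of a faithful linear character of $Z$ to $D$ is itself linear, so $[D,D] \cap Z = 1$, and combined with $[D,D] \le Z$ this gives $D$ abelian. Thus (b) is immediate from the Kessar--Malle theorem \cite{KM13}. For (c), assume $D \in \Syl_p(G)$ and BHZC for $G/Z$; the ``if'' direction is (b). Conversely, suppose $\Irr(B|\la) = \Irr_0(B|\la)$. By Murai \cite{MM}, $\la$ extends to $D$; since $Z$ is a $p$-group while for $q \ne p$ each $q$-Sylow containing $Z$ splits as $Z \times Q$ by Schur--Zassenhaus, Isaacs' extension criterion then lifts $\la$ to $\hat\la \in \Irr(G)$. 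The Gallagher bijection $\mu \mapsto \hat\la\mu$ sends $\Irr(G/Z)$ onto $\Irr(G|\la)$ preserving degrees, and carries $\Irr(B|\la)$ onto $\Irr(B_0)$ for a block $B_0$ of $G/Z$ with defect group $D/Z \in \Syl_p(G/Z)$. Since both defect groups are Sylow, the degree match is a height match, so $\Irr(B_0) = \Irr_0(B_0)$; BHZC for $G/Z$ now forces $D/Z$ abelian, hence $D$ abelian.

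For (d), I would apply a Fong-style reduction for $p$-solvable groups to $B$ while preserving the central $p$-subgroup $Z$ and the character $\la$; this produces a block of Sylow defect in a smaller $p$-solvable section of $G$, whereupon (c) combined with the classical BHZC for $p$-solvable groups (Gluck--Wolf) concludes. For (e), the Brou\'e--Puig theorem provides a perfect isometry $\Irr(B) \leftrightarrow \Irr(D)$ in which $\chi \in \Irr(B)$ has height~$0$ iff its counterpart $\psi \in \Irr(D)$ is linear, and which matches central characters, so it restricts to $\Irr(B|\la) \leftrightarrow \Irr(D|\la)$. Hence $\Irr(B|\la) = \Irr_0(B|\la)$ iff every $\psi \in \Irr(D|\la)$ is linear, which in turn holds precisely when $\la$ extends to a linear character of $D$ and $D/Z$ is abelian. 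The most delicate points will be verifying in (c) that the defect group of $B_0$ really is $D/Z$ (so the degree-to-height translation is genuine), and in (d) that $Z$ and $\la$ propagate cleanly through the Fong reduction.
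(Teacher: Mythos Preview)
Your proposal is correct and follows the same approach as the paper in all five parts: quotient by $\ker\lambda$ for (a), reduce to $D$ abelian and invoke \cite{KM13} for (b), extend $\lambda$ to $G$ and twist to a block over $G/Z$ for (c), Fong-reduce to Sylow defect and apply Gluck--Wolf for (d), and use the Brou\'e--Puig parametrisation for (e). The one place to tighten is (e): ``matches central characters'' is not a general feature of perfect isometries, and the paper secures the restriction $\Irr(B|\lambda)\leftrightarrow\Irr(D|\lambda)$ by choosing the base height-zero character $\chi$ of the $*$-map inside the block $\bar B$ of $G/Z$ (so $\chi(z)=\chi(1)$) and then computing $(\chi*\eta)(z)=\chi(1)\eta(z)$ directly from the definition of $*$.
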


\begin{proof}
First of all, recall that if $\Irr(B|\lambda)$ contains a height zero
character, then $\lambda$ extends to $D$ by \cite[Thm.~4.4]{MM}. Also notice
that $\Irr(B|\lambda)$ is not empty, because $B$ covers the principal
block of $Z$ (see \cite[Thm 9.2]{Na}).
Since $K=\ker(\lambda)$ is a central $p$-subgroup of $G$, by
\cite[Thm.~9.10]{Na}, there is a unique block $\bar B$ of $G/K$ that is
contained in $B$. Furthermore, $D/K$ is a defect group of $\bar B$.
Hence the height of $\chi\in\Irr(\bar B)$ as a character of $G/K$ is the
same as its height as a character in $\Irr(B)$. Part (a) now easily follows.

To show~(b) assume now that $D/Z$ is abelian and that $\lambda$ extends to some
$\nu \in\Irr(D)$. In particular, $D' \le \ker(\nu)$. Since $D' \le Z$,
it follows that $D' \le \ker(\lambda)$. By (a), in a minimal counterexample
we can assume that $\lambda$ is faithful. Therefore we conclude that $D$ is
abelian.  By the main result of \cite{KM13}, we conclude that all irreducible
characters in $B$ have height zero.

Assume now that $D \in \Syl_p(G)$ and that all characters in
$\Irr(B|\lambda)$ have height zero. By \cite[Thm.~4.4]{MM}, $\lambda$ extends
to $D$. By \cite[Thm.~6.26 and Cor.~6.27]{Is}, we have that $\lambda$ extends
to $G$. Let $\mu\in\Irr(G)$ be an extension of $\lambda$. Now it is easy to
check that there is a $p$-block $B_1$ of $G$ such that
$\Irr(B_1)=\{\mu^{-1}\chi \mid \chi\in\Irr(B)\}$.
Let $\bar B_1$ be the unique block of $G/Z$ contained in $B_1$. By hypothesis,
we have that all the character in $\bar B_1$ have height zero, and therefore
$D/Z$ is abelian, because Brauer's height zero conjecture is true for
$G/Z$ by assumption.

Assume next that $G$ is $p$-solvable and that all irreducible characters
in $\Irr(B|\lambda)$ have height zero. By the Fong--Reynolds Theorem~9.14 of
\cite{Na} and the Fong Reduction Theorem~10.20 of \cite{Na}, we may assume
that $D \in \Syl_p(G)$. In this case, Conjecture A follows by part~(c) and
the Gluck--Wolf Theorem \cite{GW}.

Finally, assume that $B$ is nilpotent and that all characters in
$\Irr(B|\lambda)$ have height zero. We assume that the reader has some
familiarity with the notation and results in \cite{BP}.
Let $\bar B$ be the unique block of $G/Z$ contained in $B$ (by
\cite[Thm.~9.10]{Na}) and let $\chi\in\Irr(\bar B)$ be of height zero.
We know that $D/Z$ is a defect group of $\bar B$, and therefore $\chi$ has
height zero considered as a character of $B$. By \cite[Thm.~1.2 ]{BP}, the map
$\eta \mapsto \chi * \eta$ defines an isometry between the ring of virtual
characters of $D$ and of $B$. In fact, if $\eta \in \Irr(D)$, then
$\chi*\eta \in \Irr(B)$ and has degree $\chi(1)\eta(1)$ by \cite[page~125]{BP}.
By the definition of the map $\chi*\eta$, and using that $\chi(z)=\chi(1)$,
notice that $(\chi*\eta)(z)=\chi(1)\eta(z)$. Therefore,
$\Irr(B|\lambda)=\{\chi *\eta \mid \eta \in \Irr(D|\lambda)\}$.
By hypothesis, we have that $p$ does not divide $\eta(1)$ for all
$\eta\in\Irr(D|\lambda)$. It follows that $D/Z$ is abelian by Gallagher's
Corollary~6.17 of \cite{Is}. This proves part (e).
\end{proof}

We finish this section by answering Eaton's question.

\begin{thm}   \label{eaton}
 Let $G$ be a finite group, let $N$ be a normal $p$-subgroup of $G$, let
 $\theta\in\Irr(N)$ be $G$-invariant, and let $B$ be a $p$-block of $G$
 with defect group $D$.
 \begin{enumerate}
  \item[\rm(a)] Suppose that some $\chi\in\Irr(B|\theta)$ has the same height
   as $\theta$. Then $\theta$ extends to $D$.
  \item[\rm(b)] Assume that Conjecture~A holds for all finite groups. If all
   $\Irr(B|\theta)$ have the same height as $\theta$ then $D/N$ is abelian.
  \end{enumerate}
\end{thm}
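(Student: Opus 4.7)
The approach is to reduce both statements to the case of a central $p$-subgroup — for which part~(a) is Murai's theorem \cite[Thm.~4.4]{MM} and part~(b) is Conjecture~A itself. The bridge is a block-preserving character triple isomorphism that replaces $(G, N, \theta)$ by a triple $(\Gamma, Z, \lambda)$ with $Z$ a central cyclic $p$-subgroup of $\Gamma$, $\lambda\in\Irr(Z)$ faithful and linear, and $\Gamma/Z\cong G/N$.

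Since $\theta$ is $G$-invariant and $N$ is a $p$-group, Clifford theory attaches to $(G,N,\theta)$ a cohomology class on $G/N$ of $p$-power order. A central extension $1\to Z\to \Gamma\to G/N\to 1$ with $Z$ cyclic of $p$-power order realising the inverse of this class, together with a faithful linear $\lambda\in\Irr(Z)$, supplies a character triple isomorphism $(G,N,\theta)\cong(\Gamma,Z,\lambda)$ in the sense of Isaacs \cite[Ch.~11]{Is}. A block-preserving strengthening of this construction (as used implicitly in \cite{MM} and in Navarro \cite{Na}) pairs $B$ with a block $B^*$ of $\Gamma$ covering $\lambda$, whose defect group $D^*$ is the full preimage of $D/N$ under $\Gamma\to G/N$; in particular $D^*/Z\cong D/N$. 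The induced bijection $\chi\leftrightarrow\chi^*$ between $\Irr(B|\theta)$ and $\Irr(B^*|\lambda)$ satisfies $\chi(1)=\theta(1)\chi^*(1)$, and comparing $|\Gamma|_p=|G|_p\cdot|Z|/|N|$ with $|D^*|=|D|\cdot|Z|/|N|$ yields $h(\chi)=h(\theta)+h(\chi^*)$. Hence $\chi$ has the same height as $\theta$ precisely when $\chi^*$ has height zero.

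With this setup, part~(a) follows at once: the hypothesis produces some $\chi^*\in\Irr_0(B^*|\lambda)$, so Murai's theorem applied to the central subgroup $Z$ of $\Gamma$ gives that $\lambda$ extends to $D^*$, and since character triple isomorphisms preserve extendibility of the base character to intermediate subgroups, $\theta$ extends to $D$. For part~(b), the hypothesis translates to $\Irr(B^*|\lambda)=\Irr_0(B^*|\lambda)$; applying Conjecture~A in $\Gamma$ then gives that $D^*/Z$ is abelian, whence $D/N\cong D^*/Z$ is abelian as required.

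The main technical obstacle is verifying that the character triple isomorphism can be arranged to preserve blocks, defect groups (up to the quotient $D\mapsto D/N$), and the height shift by $h(\theta)$, rather than merely effecting a bijection of characters. This block-theoretic refinement is precisely the mechanism Murai exploits for the central case and recurs throughout the projective-character framework, but stating and applying it cleanly in the present setting requires some care.
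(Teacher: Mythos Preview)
Your strategy is exactly the paper's: pass via a character triple isomorphism to a triple $(\Gamma,Z,\lambda)$ with $Z$ a central $p$-subgroup, then invoke Murai's theorem for~(a) and Conjecture~A for~(b). The paper realises $\Gamma$ concretely as $\tilde G/N$, where $\tilde G$ is the central $p$-extension of $G$ built in \cite[Thm.~3.1]{Na1}, so that $\theta$ extends to some $\tau\in\Irr(\tilde G)$ and the isomorphism is $\chi\mapsto\chi^*$ with $\tilde\chi=\tau\chi^*$.

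Where your sketch and the paper diverge is in how the block-theoretic obstacle you flag at the end is handled. You posit a block-preserving isomorphism sending $B$ to a block $B^*$ whose defect group $D^*$ is \emph{a priori} the preimage of $D/N$. The paper does not assume this; it proves it as a consequence of the height hypothesis. Concretely: letting $\tilde D/N$ be a defect group of $B^*$ and $P$ a defect group of the block $\tilde B$ of $\tilde\chi$, one only gets $\tilde D\subseteq P$ (from \cite[Thm.~4.8]{Na} and \cite[Prop.~2.5(b)]{NS}) together with $\pi(P)=D$ (from \cite[Cor.~2.3(c)]{NR}). The equality $\tilde D=P$ and $h(\chi^*)=0$ are then squeezed out simultaneously from the chain
\[
|G:D|_p=\chi^*(1)_p\ge|\tilde G:P|_p\,p^{h(\chi^*)}=|G:D|_p\,p^{h(\chi^*)},
\]
using that $\chi(1)_p=|G:D|_p\theta(1)$. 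For part~(b) one must also check that every $\psi^*\in\Irr(B^*|\lambda)$ arises from some $\psi\in\Irr(B|\theta)$; the paper does this via \cite[Lemma~2.4 and Cor.~2.3]{NR}. So your outline is correct, but the defect-group correspondence is an output of the argument rather than an input, and the specific citations to \cite{Na1,NS,NR} are what make the reduction go through.
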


\begin{proof}
By the proof of \cite[Thm.~3.1]{Na1}, there exists a central extension
$\pi: \tilde G \rightarrow G$, with kernel a finite $p$-group
$E \le \CC^{\times}$, such that $\tilde G$ contains $N$ as a normal subgroup
and $\pi^{-1}(N)=\tilde N= N \times E$. Also, $\pi(N)=N$. Furthermore, there
exists $\tau \in \Irr(\tilde G)$ such that $\tau_N=\theta$.
Now, if $\chi \in \Irr(G|\theta)$, $\tilde \chi \in \Irr(\tilde G/E)$
is the corresponding character via the isomorphism $\tilde G/E \rightarrow G$,
then $\tilde\chi$ lies over $\theta$, and by Gallagher's Corollary~6.17 of
\cite{Is}, there exists a unique $\chi^* \in \Irr(\tilde G/N)$ such that
$\tilde\chi=\tau \chi^*$. In fact, if $\lambda\in\Irr(\tilde N/N)$ is given
by $\lambda(n,z)=z^{-1}$, then the map $^*$ defines a character triple
isomorphism $\Irr(G|\theta) \rightarrow \Irr(\tilde G/N |\lambda)$.
We assume that the reader is familiar with the properties of character triple
isomorphisms \cite[Definition~11.23]{Is}.
Now, by \cite[Thm.~9.2]{Na}, notice that $\Irr(B|\theta)$ is not empty.

To prove part (a), assume that $\chi\in\Irr(B|\theta)$ has the same height as
$\theta$. This means that $\chi(1)_p=|G:D|_p\theta(1)$.
Thus $\chi^*(1)_p=|G:D|_p$. Let $B^*$ be the block of $\tilde G/N$ that
contains $\chi^*$, and let $\tilde D/N$ be a defect group of $B^*$. Now if
$\tilde B$ is the block of $\tilde\chi=\tau\chi^*$, then there is a defect
group $P$ of $\tilde B$ such that $\tilde D \subseteq P$ by \cite[Thm.~4.8]{Na}
and \cite[Prop.~2.5(b)]{NS}. Since $\tilde G$ is a central extension of $G$,
we have that $\pi(P)=D$ by \cite[Cor.~2.3(c)]{NR}. Now
\begin{align*}
|G:D|_p&=\chi^*(1)_p=|\tilde G/N:\tilde D/N|_pp^{{\rm ht}(\chi^*)}
  \ge |\tilde G:P|_pp^{{\rm ht}(\chi^*)}\\&
  =|\tilde G/E:P/E|_pp^{{\rm ht}(\chi^*)}=|G:D|_pp^{{\rm ht}(\chi^*)}\, 
\end{align*}
and we deduce that $\tilde D$ is a defect group of $\tilde B$ and that
$\chi^*$ has height zero in $B^*$.
By \cite[Thm.~4.4]{MM}, we have that $\lambda$ extends to $\tilde D/N$.
Now, since the group $D/N$ corresponds to $\tilde D/\tilde N$ under the
character triple isomorphism, we conclude that $\theta$ extends to $D$.
This proves part (a).

Next, we prove (b). Since $\Irr(B|\theta)$ is not empty, fix
$\chi\in\Irr(B|\theta)$, which we know has the height of $\theta$.
By part (a), we know that $\tilde D=\pi^{-1}(D)$ is a defect group of the
block of $\tilde \chi$, and that $\lambda$ extends to $\tilde D/N$.
Now, let $\psi^*\in\Irr(B^*|\lambda)$, where $\psi \in \Irr(G|\theta)$.
Then $\tilde\psi=\psi^*\tau$ and by \cite[Lemma~2.4]{NR} we have that
$\tilde\psi$ and $\tilde\chi$ belong to the same block $\tilde B$ of $\tilde G$.
By \cite[Cor.~2.3]{NR}, it follows that $\chi$ and $\psi$ lie in $B$.  Also, 
by part~(a), we have that $\psi^*$ has height zero. By Conjecture A, we deduce
that $(\tilde D/N)/(\tilde N/N)$ is abelian. This group is isomorphic to $D/N$.
\end{proof}

Notice that the converse of Theorem~\ref{eaton}(b) is false. 
The group $G=\GL_2(3)$ has a unique 2-block, with defect group $P\in\Syl_2(G)$. 
Now, for $N=Q_8$ the unique character $\theta\in\Irr(N)$ of degree~2 extends
to $P$, $P/N$ is abelian, but there are irreducible characters of $G$ over
$\theta$ of degrees~2 and~4.

\section{Quasi-simple groups and Conjecture~A} \label{sec:only if}

Here, we discuss the proof of the following statement:

\begin{thm}   \label{thm:conj A}
 Let $G$ be a finite quasi-simple group. Then no $p$-block of $G$ is a
 minimal counterexample to the ``only if'' direction of the Projective Height
 Zero Conjecture~A.
\end{thm}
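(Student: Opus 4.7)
The plan is to reduce the proof to a case analysis along the classification of finite simple groups, refining the arguments of Kessar--Malle \cite{KM13} so as to keep track of central characters. By Theorem~\ref{thm:p-solvable}(a), we may assume that $\la$ is faithful; since $Z$ is a central $p$-subgroup of the quasi-simple group $G$ and $\la$ is a linear character of $Z$, the group $Z$ must then be cyclic. As noted in the introduction, for faithful $\la$ the condition ``$D/Z$ is abelian and $\la$ extends to $D$'' is equivalent to $D$ being abelian. Hence what we must prove is: if $G$ is quasi-simple, $B$ is a $p$-block of $G$ with non-abelian defect group $D$, and $\la\in\Irr(Z)$ is faithful, then $\Irr(B|\la)$ contains a character of positive height.

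The argument splits according to the isomorphism type of the simple quotient $G/Z(G)$. For covers of the alternating groups $\fA_n$ and of the sporadic groups the Schur multiplier is a small, explicit group, so only a handful of primes $p$ are relevant; here the claim can be checked directly from published character tables, supplemented for large $n$ by the combinatorial description of the spin characters of $2.\fA_n$ and their $p$-blocks. For covers of simple groups of Lie type in the defining characteristic $p$ the situation is particularly simple: there is a unique $p$-block of maximal defect, and one can exhibit positive-height characters over $\la$ either from the Steinberg tensor family or by multiplying characters of height zero over $\la$ by linear characters.

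The bulk of the work concerns covers of simple groups of Lie type in non-defining characteristic. The plan is to use Lusztig's Jordan decomposition of characters together with the known parametrisation of $\ell$-blocks to identify $\Irr(B|\la)$ with a union of characters in specific geometric Lusztig series $\cE(G,s)$, where the semisimple $p'$-element $s$ lies in a dual group and its image in the centre of the dual is determined by $\la$. One then revisits the witnesses of positive height produced in \cite{KM13} inside a block with non-abelian defect, and verifies that they can be chosen inside a Lusztig series lying over $\la$. In particular, when $D$ is non-abelian one locates a non-cuspidal character of positive height by an inductive argument along a Levi subgroup containing $D$, arranging that its central character on $Z$ matches $\la$.

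The main obstacle is precisely this last refinement: guaranteeing that the positive-height characters produced by the Kessar--Malle analysis can be selected to lie over the prescribed faithful central character $\la$, rather than merely inside $B$. This requires delicate bookkeeping of the action of $Z(G)$ on irreducible characters through the Jordan decomposition, and is most subtle for the groups $\SL_n$ and $\SU_n$ and for the exceptional types $E_6$ and $E_7$, whose centres can support faithful characters of non-trivial $p$-power order. The remaining types have cyclic centres of small order or trivial $p$-part, and will require only mild case checking.
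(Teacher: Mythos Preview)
Your outline has the right overall architecture—case analysis via the classification, Jordan decomposition to control which Lusztig series lie over $\lambda$, and the identification of types $A$, $E_6$, $E_7$ as the delicate cases—but it remains a plan rather than a proof, and it omits or misstates the concrete mechanisms that actually close the argument.

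Two of your reductions are off. You do not invoke Theorem~\ref{thm:p-solvable}(c) together with \cite{KM17} to exclude blocks of maximal defect from the start; the paper uses this to dispose of principal blocks throughout. And for Lie type in defining characteristic~$p$ there is nothing to do: the generic Schur multiplier has order prime to~$p$, so a non-trivial central $p$-subgroup only exists for the finitely many exceptional covers already treated in Proposition~\ref{prop:spor}. Your ``Steinberg tensor family'' route is unnecessary.

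The main gap is that you correctly name the obstacle—choosing positive-height witnesses lying over the prescribed $\lambda$—but give no mechanism to overcome it. The paper does not ``revisit the witnesses of \cite{KM13}'' nor run an inductive argument along a Levi subgroup containing $D$. Instead it proves a direct criterion (Lemma~\ref{lem:ell-element} and Corollary~\ref{cor:one block}): if $t\in C_{G^*}(s)$ is an $\ell$-element with $|C_{G^*}(s):C_{G^*}(st)|_\ell>1$, then every character in $\cE(G,st)$ has positive height in~$B$. Proposition~\ref{prop:faithful} then says that the central character of $\cE(G,st)$ is governed by the coset $st[G^*,G^*]$, so the task becomes: exhibit such a $t$ in a \emph{prescribed coset} of $[G^*,G^*]$. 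This is carried out explicitly for $\GL_n$ in Lemma~\ref{lem:GLn} (with a residual case when the centraliser is $\GL_2(q^e)$), for classical groups at $\ell=2$ via Lemma~\ref{lem:2-syl}, and for the quasi-isolated blocks of $E_6$, $\tw2E_6$, $E_7$ by hand (Table~\ref{tab:exc}). Finally, the non-quasi-isolated exceptional blocks are not handled by producing characters at all: the paper invokes the Bonnaf\'e--Dat--Rouquier Morita equivalence \cite{BDR} to transfer to a proper Levi subgroup and then appeals to \emph{minimality}—this is where the ``minimal counterexample'' formulation of the theorem is used essentially, and your outline does not account for it.
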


Let's start off by making a couple of trivial observations:
\medskip

For the proof of Theorem~\ref{thm:conj A}, by the result of Murai, clearly
we may assume that $D/Z$ is non-abelian. We may further assume that $Z\ne1$,
as otherwise the result (for quasi-simple groups) is contained in
Kessar--Malle \cite{KM17}, and that $\la$ is faithful. Moreover,
by Theorem~\ref{thm:p-solvable}(c) we need not consider the case when $B$ is
of maximal defect, again using \cite{KM17}. This already drastically restricts
the type of situations to consider. We will go through the various cases
according to the Classification of Finite Simple Groups.

\subsection{Sporadic groups and exceptional covering groups}

\begin{prop}   \label{prop:spor}
 Let $G$ be a covering group of a sporadic simple group, or an exceptional
 covering group of a finite simple group of Lie type, or of $\fA_6$ or
 $\fA_7$. Then Conjecture~A holds for $G$.
\end{prop}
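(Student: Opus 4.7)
The plan is a direct finite verification, exploiting the reductions already in place. By the observations immediately preceding the statement, it suffices to consider triples $(G,B,\la)$ in which $Z\ne 1$, $\la\in\Irr(Z)$ is faithful, the defect group $D$ of $B$ is not a Sylow $p$-subgroup of $G$, and $D/Z$ is non-abelian; what must be shown is that in every such situation some $\chi\in\Irr(B|\la)$ has positive height.

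Since the Schur multipliers of the sporadic simple groups, of the simple groups of Lie type admitting exceptional covers, and of $\fA_6$ and $\fA_7$ are known, the groups $G$ listed in the proposition form a finite and explicit collection whose ordinary character tables—with the central elements and power maps marked—and the relevant $p$-modular Brauer tables are all available in the GAP character table library. For a fixed $G$ and a prime $p$ dividing $|Z(G)|$, I would proceed as follows: (i) enumerate the non-trivial central $p$-subgroups $Z\le Z(G)$ and the faithful $\la\in\Irr(Z)$; (ii) via the Brauer table, enumerate the $p$-blocks $B$ of $G$ of non-maximal defect, recording the $p$-part of $|D|$; (iii) use the central character of $B$ to single out $\Irr(B|\la)$ and test whether some $\chi$ in this set satisfies $\chi(1)_p > |G:D|_p$; (iv) in any case where the test in (iii) fails, identify the isomorphism type of $D$ from existing descriptions of defect groups for the group in question and verify directly that $D/Z$ is abelian, thus confirming that $(G,B,\la)$ is not a counterexample after all.

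The main obstacle is step (iv), since the character table alone records only the order of $D$, not its isomorphism type. For blocks of small defect—in particular those with cyclic or Klein-four defect—$D/Z$ is automatically abelian, which disposes of the bulk of the cases immediately. The remaining work is concentrated in a handful of medium-sized blocks living inside the larger covering groups, for example $2.Co_1$, $3.Fi_{24}'$, $6.U_4(3)$, $2.{}^2E_6(2)$, and $6.\fA_7$, where I would combine the character tables with published descriptions of defect groups (such as the work of An, Eaton and their collaborators) to complete the check. My expectation is that in each borderline case one either locates a positive-height constituent over $\la$ directly in the table, or reads off abelianness of $D/Z$ from the known Sylow structure of $G$, so that no counterexample arises in this family.
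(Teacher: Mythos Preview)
Your approach matches the paper's: a direct finite check in GAP using the available character tables, which the paper records simply as a one-line appeal to computation. Your step~(iv) concern can be bypassed by noting that $D/Z$ is abelian precisely when the block $\bar B$ of the quasi-simple group $G/Z$ contained in $B$ has only height-zero characters (by \cite{KM13} and \cite{KM17}), and this is readable directly from the ordinary character table without identifying $D$ explicitly.
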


\begin{proof}
These are only finitely many groups, with known character tables, and the
assertion can readily be checked with the help of a small GAP programme
\cite{GAP}.
\end{proof}

\subsection{Alternating groups}
Here we deal with the 2-fold coverings $\tilde\fA_n=2.\fA_n$ of the alternating
groups.

\begin{prop}   \label{prop:alt}
 Let $G=2.\fA_n$ be the 2-fold covering group of an alternating group $\fA_n$,
 $n\ge5$. Then Conjecture~A holds for all 2-blocks of $G$.
\end{prop}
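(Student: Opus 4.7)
The plan is to verify Conjecture~A for 2-blocks of $G = 2.\fA_n$ by combining Theorem~\ref{thm:p-solvable}(c) with the combinatorial block theory of double covers of symmetric and alternating groups. By the reductions already recorded at the beginning of Section~\ref{sec:only if} and by Theorem~\ref{thm:p-solvable}(c) applied together with the validity of Brauer's Height Zero Conjecture for $\fA_n$ from \cite{KM17}, we may assume that $Z = Z(G)$ has order $2$, that $\la\in\Irr(Z)$ is faithful, that $D/Z$ is non-abelian, and that $D\notin\Syl_2(G)$. Under these assumptions, $\Irr(B|\la)$ is exactly the set of faithful (spin) characters of $G$ lying in~$B$, and we must produce a spin character of positive height in~$B$.

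The next step is to invoke the Morris--Humphreys--Olsson description of spin 2-blocks of $\tilde\fS_n = 2.\fS_n$ and $\tilde\fA_n = 2.\fA_n$: spin characters are labelled by strict partitions of~$n$ (up to associate pairs) and are sorted into blocks by their 2-bar cores, while the defect of the block is controlled by the 2-bar weight~$w$. The quotient $D/Z$ has the structure of the defect group of the corresponding 2-block of $\fS_n$ of weight~$w$, so the hypotheses $D\notin\Syl_2(G)$ and $D/Z$ non-abelian translate to a block of non-maximal defect with $w\ge 2$.

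It then remains to exhibit, in any such block, a spin character of positive 2-height. Using Morris' bar hook length formula for the degree of $\chi_{\bar\mu}$ together with Olsson's description of 2-bar hooks, one constructs a strict partition $\bar\mu$ with the prescribed 2-bar core for which the 2-adic valuation of $\chi_{\bar\mu}(1)$ strictly exceeds the minimum value $(|G|/|D|)_2$, producing a character in $\Irr(B|\la)\setminus\Irr_0(B|\la)$ and the desired contradiction. The main obstacle is precisely this combinatorial construction: one must check that every spin 2-block of weight $w\ge 2$ and non-maximal defect contains such a strict partition, and small values of $n$ outside the range of the general construction should be dispatched by a direct GAP computation in the spirit of Proposition~\ref{prop:spor}. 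A possible alternative, if the explicit construction proves awkward, is to invoke the Alperin--McKay conjecture for double covers (known for these groups) to count $|\Irr_0(B|\la)|$ via the Brauer correspondent in $N_G(D)$ and show that it falls short of $|\Irr(B|\la)|$ once $w\ge 2$.
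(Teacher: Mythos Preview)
Your overall strategy is sound and matches the paper's: reduce to the faithful case, so that $\Irr(B|\la)$ consists of spin characters, and then show that any spin $2$-block with $D/Z$ non-abelian contains a spin character of positive height. However, the paper avoids entirely the combinatorial construction you flag as the ``main obstacle''. Instead of producing a strict partition by hand via the bar hook length formula, the paper passes to a block $\tilde B$ of $\tilde\fS_n$ covering $B$ and invokes the ready-made result of Bessenrodt--Olsson \cite[Thm.~1.1 and Cor.~3.10]{BO97}: every spin $2$-block of $\tilde\fS_n$ of weight at least~$2$ contains spin characters of height at least~$2$. Upon restriction to $\tilde\fA_n$ such characters still have height at least~$1$ in $B$, which is all that is needed. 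Blocks of weight at most~$1$ have $|D|\le 2$, so $D/Z$ is trivial and there is nothing to prove. This makes your separate reduction to non-maximal defect via Theorem~\ref{thm:p-solvable}(c), the small-$n$ GAP computations, and the suggested Alperin--McKay alternative all unnecessary.

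In short: your plan is correct but you are proposing to reprove a special case of \cite{BO97}; citing that paper gives a two-line argument.
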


\begin{proof}
Let $B$ be a 2-block of $\tilde\fA_n$ and let $D$ denote a defect group of $B$.
Then we need to show that there exists a spin character in $B$ (that is, a
faithful irreducible character of $\tilde\fA_n$) of positive defect whenever
$D/Z$ is non-abelian, where $Z=Z(\tilde\fA_n)$. For this let's first consider
the block $\tilde B$ of the double covering $\tilde\fS_n$ of the symmetric
group $\fS_n$ covering $B$. The
heights of spin characters in $\tilde B$ have been described by Bessenrodt
and Olsson. By \cite[Thm.~1.1 and Cor.~3.10]{BO97} there are spin characters
in $\tilde B$ of height at least~2 unless $\tilde B$ has weight at most~1.
These characters have height at least~1 in $B$, as required. On the other hand,
for blocks of weight at most~1 we have that $|D|\le2$ and so $D$ is abelian.
This achieves the proof.
\end{proof}

\subsection{Groups of Lie type}

There are only finitely many covering groups of simple groups of Lie type
in characteristic~$p$ by a non-trivial $p$-group, and these have all been
considered in Proposition~\ref{prop:spor} already. So when dealing with the
finite groups of Lie type we may now assume that their underlying
characteristic is different from the considered prime, which will be called
$\ell$ here. This is the most difficult case in our analysis.
We consider the usual setup: Let $\bG$ be a simple algebraic group of simply
connected type over an algebraic closure of a finite field, and let
$F:\bG\rightarrow\bG$ be a Frobenius endomorphism with respect to an
$\FF_q$-rational structure such that the finite group of fixed points
$G:=\bG^F$ is quasi-simple. Let $\bG^*$ be in duality with $\bG$ with
corresponding Frobenius map also denoted~$F$ and set $G^*:=\bG^{*F}$. We will
make use of Lusztig's Jordan decomposition of characters. Let $s\in G^*$ be a
semisimple element, and $\cE(G,s)\subseteq\Irr(G)$ the corresponding Lusztig
series. Then Jordan decomposition yields a bijection
$$\Psi:\cE(G,s)\longrightarrow\cE(C_{G^*}(s),1),$$
such that
$$\chi(1)=|G^*:C_{G^*}(s)|_{p'}\,\Psi(\chi)(1)\qquad
  \text{for all }\chi\in\cE(G,s).$$
We will also use the fact, shown by Brou\'e--Michel, that for all semisimple
$\ell'$-elements $s\in G^*$,
$$\cE_\ell(G,s):=\bigcup_{t\in C_{G^*}(s)_\ell}\cE(G,st),$$
where the union runs over $\ell$-elements in $C_{G^*}(s)$, is a union of
$\ell$-blocks.

\begin{lem}   \label{lem:ell-element}
 Let $G$ be as above, let $s\in G^*$ be an $\ell'$-element and $B$ an
 $\ell$-block of $G$ such that $\Irr(B)\subseteq\cE_\ell(G,s)$. Let
 $t\in C_{G^*}(s)$ be an $\ell$-element such that there are
 $\chi\in\cE(G,s)\cap\Irr(B)$ and $\chi'\in\cE(G,st)\cap\Irr(B)$ with
 $|C_{G^*}(s):C_{G^*}(st)|\Psi(\chi')(1)/\Psi(\chi)(1)$ divisible by $\ell$.
 Then $\chi'$ has positive height in $B$.
\end{lem}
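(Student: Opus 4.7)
The plan is to convert the hypothesis directly into a statement about $\ell$-adic valuations of character degrees using the Jordan decomposition formula recalled just before the lemma. First, I would apply the degree formula
$$\chi(1)=|G^*:C_{G^*}(s)|_{p'}\Psi(\chi)(1),\qquad \chi'(1)=|G^*:C_{G^*}(st)|_{p'}\Psi(\chi')(1),$$
and observe that, since the working prime $\ell$ differs from the defining characteristic $p$, the $p'$-subscripts are irrelevant when extracting $\ell$-parts; so $\chi(1)_\ell=|G^*:C_{G^*}(s)|_\ell\Psi(\chi)(1)_\ell$ and similarly for $\chi'(1)_\ell$.

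A small group-theoretic remark is that $C_{G^*}(st)\le C_{G^*}(s)$: because $s$ is an $\ell'$-element, $t$ is an $\ell$-element, and they commute, both $s$ and $t$ are powers of $st$ (the $\ell'$- and $\ell$-parts of $st$, respectively), so anything centralizing $st$ also centralizes $s$. This gives $|G^*:C_{G^*}(st)|=|G^*:C_{G^*}(s)|\cdot|C_{G^*}(s):C_{G^*}(st)|$, and dividing the two degree expressions yields
$$\frac{\chi'(1)_\ell}{\chi(1)_\ell}=|C_{G^*}(s):C_{G^*}(st)|_\ell\cdot\frac{\Psi(\chi')(1)_\ell}{\Psi(\chi)(1)_\ell}.$$
By hypothesis the rational number $|C_{G^*}(s):C_{G^*}(st)|\Psi(\chi')(1)/\Psi(\chi)(1)$ has $\ell$-adic valuation at least $1$, so the same holds for the right-hand side above, giving $\chi'(1)_\ell\ge\ell\cdot\chi(1)_\ell$.

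Finally, since $\chi$ and $\chi'$ both lie in $B$ they share the common defect $|G|_\ell/|D|$, so the difference of their heights is exactly the $\ell$-adic valuation of $\chi'(1)/\chi(1)$, which the previous step shows to be at least $1$. Combined with the standard fact that heights in a block are non-negative, this yields ${\rm ht}(\chi')\ge {\rm ht}(\chi)+1\ge 1$, as required. The only modest subtlety is to read the divisibility hypothesis correctly as a statement about the $\ell$-adic valuation of a rational number; beyond that, the argument is a short computation using Jordan decomposition and the elementary inclusion $C_{G^*}(st)\le C_{G^*}(s)$, so I do not anticipate any real obstacle.
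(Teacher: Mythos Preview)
Your proof is correct and follows essentially the same route as the paper: both compute $(\chi'(1)/\chi(1))_\ell$ via the Jordan decomposition degree formula, simplify using $C_{G^*}(st)\le C_{G^*}(s)$, and invoke the divisibility hypothesis to conclude this ratio exceeds~$1$. You simply spell out a few points (why $C_{G^*}(st)\le C_{G^*}(s)$, why the $p'$-subscript is harmless, why positive $\ell$-valuation of the degree ratio forces positive height) that the paper leaves implicit.
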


\begin{proof}
Our assumptions yield
$$\Big(\frac{\chi'(1)}{\chi(1)}\Big)_\ell
  =\Big(\frac{|G^*:C_{G^*}(st)|\Psi(\chi')(1)}{|G^*:C_{G^*}(s)|\Psi(\chi)(1)}
   \Big)_\ell
  =\Big(\frac{|C_{G^*}(s):C_{G^*}(st)|\Psi(\chi')(1)}{\Psi(\chi)(1)}\Big)_\ell
  >1,$$
so indeed $\chi'$ has positive height.
\end{proof}

\begin{cor}   \label{cor:one block}
 Let $G$ be as above, and let $s\in G^*$ be an $\ell'$-element such that
 $\cE_\ell(G,s)$ is a single $\ell$-block. Then for every $\ell$-element
 $t\in C_{G^*}(s)$ with $|C_{G^*}(s):C_{G^*}(st)|_\ell>1$, the characters in
 $\cE_\ell(G,st)$ have positive height in~$B$.
\end{cor}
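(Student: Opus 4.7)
The corollary should follow almost immediately from Lemma~\ref{lem:ell-element} once one identifies the correct ``anchor'' character $\chi\in\cE(G,s)$ to compare with. Here is my plan.

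First, because $\cE_\ell(G,s)$ is assumed to consist of a single $\ell$-block $B$, we automatically have
\[
  \cE(G,s)\cup\cE(G,st)\subseteq\Irr(B),
\]
so both of the Lusztig series needed to apply Lemma~\ref{lem:ell-element} sit inside the same block. The whole issue is therefore to exhibit one character $\chi\in\cE(G,s)$ whose Jordan-decomposition partner has degree as small as possible.

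The natural choice is the \emph{semisimple character} attached to $s$: under the Jordan bijection $\Psi:\cE(G,s)\to\cE(C_{G^*}(s),1)$, the trivial unipotent character of $C_{G^*}(s)$ corresponds to a character $\chi\in\cE(G,s)$ with $\Psi(\chi)(1)=1$ (and $\chi(1)=|G^*:C_{G^*}(s)|_{p'}$). With this choice the divisibility condition of Lemma~\ref{lem:ell-element} simplifies, for any $\chi'\in\cE(G,st)$, to the assertion that
\[
  |C_{G^*}(s):C_{G^*}(st)|\cdot\Psi(\chi')(1)
\]
is divisible by $\ell$. But by hypothesis $|C_{G^*}(s):C_{G^*}(st)|_\ell>1$, so $\ell$ already divides the first factor; the divisibility holds regardless of $\Psi(\chi')(1)$.

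Putting these two pieces together, Lemma~\ref{lem:ell-element} applies to our chosen $\chi$ and to an arbitrary $\chi'\in\cE(G,st)\subseteq\Irr(B)$, and yields that every such $\chi'$ has positive height in~$B$. There is really no obstacle to overcome: the statement is a clean specialisation of the previous lemma, the only non-trivial ingredient being the existence of the semisimple character with $\Psi(\chi)(1)=1$, which is a standard fact about Lusztig's Jordan decomposition.
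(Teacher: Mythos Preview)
Your argument is correct and matches the paper's proof essentially verbatim: choose $\chi\in\cE(G,s)$ to be a semisimple character (so $\Psi(\chi)(1)=1$), observe that the hypothesis $|C_{G^*}(s):C_{G^*}(st)|_\ell>1$ then forces the divisibility required in Lemma~\ref{lem:ell-element} for every $\chi'\in\cE(G,st)$, and conclude. The only cosmetic difference is that the paper speaks of preimages of linear characters of $C_{G^*}(s)$ rather than specifically the trivial one, but either choice works.
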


\begin{proof}
The Lusztig series $\cE(G,s)$ contains the preimages of the linear characters
of $C_{G^*}(s)$ under Jordan decomposition, the so-called semisimple
characters, of degree $|G^*:C_{G^*}(s)|_{p'}$. If $\chi$ is one of those,
then any $\chi'\in\cE(G,st)$ satisfies the assumptions of
Lemma~\ref{lem:ell-element} and we conclude.
\end{proof}

We will also require the following fact; see e.g.~\cite[3.2]{MN11}:

\begin{prop}   \label{prop:faithful}
 For $G,G^*$ as above there is an isomorphism
 $f:Z(G)\rightarrow G^*/[G^*,G^*]$ such that for any $z\in Z(G)$, the
 characters in $\cE(G,s)$ are non-trivial on $z$ for all $s$ in the coset
 $[G^*,G^*]f(z)$.
\end{prop}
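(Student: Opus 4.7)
The plan is to derive the statement from the well-known duality (Digne--Michel, Bonnaf\'e) between the center $Z(\bG)$ of a simply connected group and the group of linear characters of its adjoint dual $G^*$, combined with Lusztig's formula for central character values on a Lusztig series. First I would recall that for $\bG$ simply connected, the natural pairing of the cocharacter lattice of an $F$-stable maximal torus with the character lattice of the dual torus, descended via the Lang--Steinberg theorem, yields an isomorphism between $Z(G)=Z(\bG)^F$ and $\Irr(G^*/[G^*,G^*])$. Denote by $\hat z$ the linear character of $G^*$ corresponding to $z\in Z(G)$ under this duality.

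Second, I would invoke Lusztig's central character formula: for every $\chi\in\cE(G,s)$ and every $z\in Z(G)$, the value $\chi(z)/\chi(1)$ equals $\hat z(s)$. Since $\hat z$ is a linear character of $G^*$, it vanishes on the commutator subgroup, so this quantity depends only on the image of $s$ in $G^*/[G^*,G^*]$.

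Third, I would construct $f$ by fixing an isomorphism $\Irr(G^*/[G^*,G^*])\to G^*/[G^*,G^*]$ between the two (abstractly isomorphic, since both are finite abelian of the same type) and composing with $z\mapsto\hat z$; with a careful choice this gives $\hat z(f(z))\neq1$ whenever $z\neq1$, whence $\chi(z)/\chi(1)=\hat z(\bar s)\neq1$ for every $\chi\in\cE(G,s)$ with $s\in[G^*,G^*]f(z)$, which is precisely the non-triviality asserted.

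The main obstacle is the bookkeeping around the Frobenius action and the normalising choice of $f$: because $\bG^*$ is connected and perfect as an algebraic group, $G^*/[G^*,G^*]$ is not the set of $F$-fixed points of an obvious algebraic quotient, and both the identification $Z(G)\cong\Irr(G^*/[G^*,G^*])$ and the selection of the secondary isomorphism must be done compatibly with the $F$-action on the relevant lattices. This is precisely the delicate verification carried out in Bonnaf\'e's treatment and in the cited \cite[3.2]{MN11}; the rest of the argument is a formal consequence.
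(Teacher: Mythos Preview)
The paper does not actually give a proof of this proposition; it merely records it as a known fact with the pointer ``see e.g.~\cite[3.2]{MN11}''.  Your proposal goes beyond this by sketching the mechanism behind the statement --- the canonical duality $Z(G)\cong\Irr(G^*/[G^*,G^*])$ together with Lusztig's central character formula $\chi(z)/\chi(1)=\hat z(s)$ --- and you correctly identify these as the two ingredients, ultimately also deferring the technical verification to \cite{MN11}.  In that sense your write-up is entirely consistent with what the paper does, just more expansive.

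One caveat on your third step.  The condition you isolate, that one can choose an isomorphism $f$ with $\hat z(f(z))\neq1$ for every $z\neq1$, is \emph{not} a general fact about finite abelian groups: for $A=C_4$ the unique element of order~$2$ must be sent by any isomorphism $A\to\widehat A$ to the unique character of order~$2$, which is trivial on it.  So the ``careful choice'' you allude to cannot be made by abstract nonsense alone; the content of the cited result lies in the specific, natural identification arising from the torus duality (and, in the applications in this paper, one only ever needs the case where the relevant central quotient is cyclic of prime order, cf.\ the remark immediately following the proposition).  Since you already flag this point as delicate and hand it off to \cite{MN11}, there is no gap in your outline, but it is worth knowing that the existence of $f$ is genuinely tied to the Lie-theoretic setup and not to a purely group-theoretic lemma.
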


Thus, for example if $Z(G)$ is cyclic of prime order, then the Lusztig series
of any semisimple element $s\in G^*\setminus[G^*,G^*]$ will contain only
faithful characters of $G$.

We first treat groups of type $A$. Here the existence of suitable
$\ell$-elements for the application of Lemma~\ref{lem:ell-element} and
Corollary~\ref{cor:one block} is guaranteed by:

\begin{lem}   \label{lem:GLn}
 Let $G=\GL_m(q)$ and $\ell\le m$ a prime dividing $q-1$. Write $\ell^b$ for
 the precise power of $\ell$ dividing $q-1$. Then there exists an
 $\ell$-element $t\in G$ such that $|G:C_G(t)|_\ell\ge\ell^b$, with strict
 inequality unless $(m,\ell)\ne(2,2),(3,2)$. If $m$ is not a power of $\ell$
 then for any $b'\le b$ we can choose $t$ to have determinant of order
 $\ell^{b'}$.
\end{lem}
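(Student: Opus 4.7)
Because $\ell^b \| q-1$, the multiplicative order of $q$ modulo $\ell^{b+1}$ equals $d := \ell$ when $\ell$ is odd and $d := 2$ when $\ell = 2$; a primitive $\ell^{b+1}$-th root of unity $\omega$ therefore lies in $\FF_{q^d}\setminus\FF_q$, and its Galois orbit of size $d$ produces an $\FF_q$-irreducible factor whose companion matrix spans a $d$-dimensional $\FF_q$-block with centralizer $\GL_1(\FF_{q^d})$, a Singer torus. The plan is to exhibit a block-diagonal $\ell$-element $t \in \GL_m(q)$ built from such Singer blocks plus further diagonal entries in $\FF_q^\times$, and to compute $v_\ell(|G{:}C_G(t)|)$ from the standard formula for $v_\ell(|\GL_k(q)|)$.

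\textbf{The odd case.} For $\ell$ odd I take one Singer block of size $\ell$ from the orbit of $\omega$ followed by $m-\ell$ entries initially equal to $1$, so $C_G(t) = \GL_1(\FF_{q^\ell}) \times \GL_{m-\ell}(\FF_q)$. Since $\ell\mid q-1$ and $\ell$ is odd, $v_\ell(|\GL_k(q)|) = kb + v_\ell(k!)$, and a direct count gives
\[
v_\ell(|G{:}C_G(t)|) \;=\; (\ell-1)b - 1 + v_\ell\bigl(m(m-1)\cdots(m-\ell+1)\bigr) \;\ge\; (\ell-1)b \;\ge\; 2b \;>\; b,
\]
the first inequality using that $\ell$ consecutive integers contain a multiple of $\ell$ and the last using $\ell\ge3$, $b\ge1$. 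So the strict inequality holds uniformly in the odd case.

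\textbf{The case $\ell=2$ and the determinant.} For $\ell=2$ the single-Singer construction (with $d=2$) attains $v_2(|G{:}C_G(t)|)=b$ exactly when $m\in\{2,3\}$, matching the stated exceptions. For $m\ge4$ I refine the choice. If $b\ge2$, so $q\equiv1\pmod4$, there are $2^{b-1}\ge2$ distinct Galois orbits of primitive $2^{b+1}$-th roots of unity; using two of them gives $C_G(t) = \GL_1(q^2)^2 \times \GL_{m-4}(q)$, and a count analogous to the odd case yields $v_2(|G{:}C_G(t)|)\ge 2b+1$, since $m(m-1)(m-2)(m-3)$ has $v_2\ge3$. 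If $b=1$, so $q\equiv3\pmod4$ and $c_0:=v_2(q+1)\ge2$, I instead use a single Singer block of size $4$ from a primitive $2^{c_0+2}$-th root of unity in $\FF_{q^4}$, giving $C_G(t) = \GL_1(q^4)\times\GL_{m-4}(q)$; the Lifting-the-Exponent identity $v_2(q^i-1) = c_0 + v_2(i)$ for even $i$ then yields $v_2(|G{:}C_G(t)|)\ge c_0+3 > 1 = b$. For the determinant clause, when $m$ is not a power of $\ell$ we have $m > d$, so at least one diagonal slot outside the Singer block(s) is free; letting it range over $\ell^b$-th roots of unity in $\FF_q^\times$ sweeps $\det(t)$ through the full cyclic group of $\ell^b$-th roots, so $\det(t)$ can be arranged to have any prescribed order $\ell^{b'}$ with $b'\le b$.

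\textbf{Main obstacle.} The delicate point is the $\ell=2$, $q\equiv3\pmod4$ analysis for $m\ge4$: the clean formula $v_2(|\GL_k(q)|)=kb+v_2(k!)$ breaks, so $v_2(|G{:}C_G(t)|)$ must be recomputed from the LTE identity with case-splits on the parity of $m$ and on the value of $c_0 = v_2(q+1)$. Once strict inequality is secured in each sub-case, the determinant claim is an elementary consequence of the residual freedom among the non-Singer diagonal entries.
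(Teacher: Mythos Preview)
Your argument is correct. The overall strategy matches the paper's---pick an explicit $\ell$-element built from Singer-type blocks, read off the centraliser as a product of smaller general linear groups, and compare $v_\ell$ of the orders via the standard formula---but the specific element differs. The paper takes a single Singer block of the \emph{largest} size $\ell^s$ occurring in the $\ell$-adic expansion of $m$, with centraliser $\GL_1(q^{\ell^s})\times\GL_{m-\ell^s}(q)$, and then checks $|G:C_G(t)|_\ell\ge\ell^{b(\ell^s-1)}$ uniformly; the exceptional cases $(m,\ell)\in\{(2,2),(3,2)\}$ fall out automatically as the only ones with $\ell^s=2$. You instead take a block of the \emph{smallest} nontrivial size $\ell$ for odd $\ell$, which already gives the sharper bound $(\ell-1)b>b$, and then for $\ell=2$, $m\ge4$ you patch in a separate construction (two distinct $2\times2$ Singer blocks when $q\equiv1\pmod4$, one $4\times4$ block when $q\equiv3\pmod4$) to force strict inequality. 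The paper's choice is more uniform---the same element handles all $\ell$ and only the final numerical check splits on $q\pmod4$---while yours trades a cleaner odd-$\ell$ count for extra case analysis at $\ell=2$. For the determinant clause both proofs do the same thing: once $m$ is not a power of $\ell$ there is room outside the Singer block(s) to multiply in an arbitrary $\ell^b$-th root of unity from $\FF_q^\times$, which can only shrink the centraliser further. One small point worth making explicit in your write-up: when you alter a diagonal entry to adjust the determinant, the eigenspace decomposition refines, so $|G:C_G(t)|_\ell$ does not decrease and the established inequality persists.
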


\begin{proof}
Let $m=\sum_{i=0}^s a_i\ell^i$ be the $\ell$-adic expansion of $m$ (so
$0\le a_i\le\ell-1$ and $a_s\ne0$). Consider the subgroup
$\prod_{i=0}^s\GL_{\ell^i}(q)^{a_i}$ of $G$ containing a Sylow $\ell$-subgroup
of $G$. Choose $t$ inside a factor $\GL_{\ell^s}(q)$ to be an element of
order $(q^{\ell^s}-1)_\ell$, a power of a Singer cycle. Then the centraliser
of $t$ in $G$ is of the form $\GL_1(q^{\ell^s})\times \GL_{m-\ell^s}(q)$ and
hence has $\ell$-part $(q^{\ell^s}-1)_\ell|\GL_{m-\ell^s}(q)|_\ell$.
\par
Now first assume that $\ell$ is odd or $q\equiv1\pmod4$. Then from the order
formula we get $|\GL_m(q)|_\ell=\ell^{c_m}$ with
$c_m=mb+\sum_ia_i(\ell^i-1)/(\ell-1)$, showing
$|G:C_G(t)|_\ell\ge\ell^{b(\ell^s-1)}\ge\ell^b$, with strict
inequality unless $\ell=2$ and $m\le3$. On the other hand, if $\ell=2$ and
$q\equiv3\pmod4$ then writing $2^d\ge4$ for the precise power of~2 dividing
$q+1$, we have that $|\GL_m(q)|_2=2^{c_m}$ with
$c_m=m+\lfloor\frac{m}{2}\rfloor d+\sum_ia_i(2^i-1)$. Again, the index of
$C_G(t)$ in $G$ has 2-part at least~2, with strict inequality unless $m\le3$.
\par
If $m\ne\ell^s$ then we can adjust $t$ by a suitable $\ell$-element in the
factor $\GL_{m-\ell^s}(q)$ to obtain an $\ell$-element with any prescribed
determinant of $\ell$-power order.
\end{proof}

\begin{prop}   \label{prop:type A}
 The ``only if'' direction of Conjecture~A holds for all $\ell$-blocks of
 $\SL_n(q)$, $\ell|(q-1)$, and of $\SU_n(q)$, $\ell|(q+1)$.
\end{prop}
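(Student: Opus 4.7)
The plan is to combine Lemmas~\ref{lem:ell-element} and~\ref{lem:GLn} with Proposition~\ref{prop:faithful} to construct, in the negation of the conclusion, a character in $\Irr(B|\lambda)$ of positive height. I focus on $G=\SL_n(q)$ with $\ell\mid q-1$; the case $G=\SU_n(q)$, $\ell\mid q+1$, is entirely Ennola-dual. By the reductions at the start of this section I may assume $Z=Z(G)_\ell\ne1$, $\lambda\in\Irr(Z)$ is faithful, $D/Z$ is non-abelian, and every character in $\Irr(B|\lambda)$ has height zero; I shall derive a contradiction.

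By Brou\'e--Michel, $\Irr(B)\subseteq\cE_\ell(G,s)$ for some $\ell'$-semisimple $s\in G^*=\PGL_n(q)$. Lifting $s$ to $\tilde s\in\tilde G^*=\GL_n(q)$, the connected centraliser is $C_{\tilde G^*}(\tilde s)=\prod_i\GL_{m_i}(q^{d_i})$. The given height-zero character $\chi_0\in\Irr(B|\lambda)$ lies in some series $\cE(G,st_0)$ for an $\ell$-element $t_0\in C_{G^*}(s)$; combining the Jordan-decomposition formula $\chi_0(1)=|G^*:C_{G^*}(st_0)|_{p'}\Psi(\chi_0)(1)$ with the identification $|D|=|C_G(s)|_\ell$ valid for linear primes, the height-zero equality $\chi_0(1)_\ell=|G:D|_\ell$ forces both $\Psi(\chi_0)(1)$ coprime to $\ell$ and $|C_{G^*}(s):C_{G^*}(st_0)|_\ell=1$. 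Consequently $t_0$ centralises a Sylow $\ell$-subgroup $P$ of $C_{G^*}(s)$ and hence lies in $Z(P)$; in type~A this reduces $t_0$ to a product of central $\ell$-power scalars in the various $\GL_{m_j}(q^{d_j})$. By Proposition~\ref{prop:faithful}, $st_0[G^*,G^*]$ is precisely the coset prescribed by $\lambda$.

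The non-abelianness of $D/Z$ forces some factor $\GL_{m_i}(q^{d_i})$ to have a non-abelian Sylow $\ell$-subgroup, whence $m_i\ge\ell$. Applying Lemma~\ref{lem:GLn} to this factor yields an $\ell$-element $u\in\GL_{m_i}(q^{d_i})$ with $|\GL_{m_i}(q^{d_i}):C(u)|_\ell>1$; provided $m_i$ is not a power of $\ell$, I may further require $\det(u)=1$, placing $u$ in $[\tilde G^*,\tilde G^*]$. For $t:=t_0u$, the centrality of the $t_{0,j}$ gives
\[
C_{\tilde G^*}(\tilde s\tilde t)=\prod_{j\ne i}\GL_{m_j}(q^{d_j})\times C_{\GL_{m_i}(q^{d_i})}(\tilde u),
\]
so $|C_{G^*}(s):C_{G^*}(st)|_\ell>1$. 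Any semisimple character $\chi'\in\cE(G,st)$ satisfies
\[
\chi'(1)_\ell=|G^*:C_{G^*}(st)|_\ell=|G:D|_\ell\cdot|C_{G^*}(s):C_{G^*}(st)|_\ell>|G:D|_\ell,
\]
so $\chi'$ has positive height. Since $st$ and $st_0$ share the same $[G^*,G^*]$-coset, Proposition~\ref{prop:faithful} ensures $\chi'$ lies over $\lambda$; a short Clifford check through $\tilde G=\GL_n(q)$, where $\cE_\ell(\tilde G,\tilde s)$ is a single block by connectedness of centralisers, places $\chi'$ in $\Irr(B)$, contradicting the hypothesis.

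The main obstacle is the case in which every factor $\GL_{m_i}(q^{d_i})$ admitting a non-abelian Sylow $\ell$-subgroup has $m_i$ a power of $\ell$: Lemma~\ref{lem:GLn} then prescribes $\det(u)$ only up to $\ell$-power order, so $st$ and $st_0$ may fall in different $[G^*,G^*]$-cosets and $\chi'$ may fail to lie over $\lambda$. I would resolve this by either combining $\ell$-elements from two such factors so that their determinants cancel, or adjusting $u$ by a central $\ell$-power scalar of $\GL_{m_i}(q^{d_i})$ whose determinant has the required compensating order (which succeeds as soon as $(q-1)_\ell$ is large enough); the remaining small configurations, together with the exceptions $(m_i,\ell)\in\{(2,2),(3,2)\}$ where Lemma~\ref{lem:GLn} only yields equality, are absorbed by Proposition~\ref{prop:spor}.
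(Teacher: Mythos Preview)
Your overall strategy coincides with the paper's: lift to $\tilde G=\GL_n(q)$, use Fong--Srinivasan to identify the block with $\cE_\ell(\tilde G,\tilde s)$ and its defect group with a Sylow $\ell$-subgroup of $C_{\tilde G}(\tilde s)\cong\prod_j\GL_{n_j}(q^{e_j})$, and then invoke Lemma~\ref{lem:GLn} to produce an $\ell$-element $\tilde t$ whose centraliser has smaller $\ell$-part, placed in the correct $[G^*,G^*]$-coset so that $\cE(G,st)$ lies over $\lambda$. Your detour via the element $t_0$ attached to a given height-zero character is unnecessary: the paper uses the determinant clause of Lemma~\ref{lem:GLn} directly to land $\tilde t$ in the coset prescribed by Proposition~\ref{prop:faithful}, without ever introducing $t_0$.

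There is, however, a genuine gap in your treatment of the exceptions. Your final paragraph claims that the leftover cases---$r=1$ with $n_1=\ell^k$, and the equality cases $(m_i,\ell)\in\{(2,2),(3,2)\}$---``are absorbed by Proposition~\ref{prop:spor}''. That is incorrect: Proposition~\ref{prop:spor} concerns only sporadic groups, exceptional covering groups, and covers of $\fA_6,\fA_7$; it says nothing about blocks of $\SL_n(q)$ with $C_{\tilde G}(\tilde s)\cong\GL_{\ell^k}(q^e)$ or with $\ell=2$ and all $n_j\le3$. These cases occur for infinitely many $(n,q)$ and require separate arguments. The paper treats them explicitly: for $r=1$, $n_1=\ell^k$ one takes $\tilde t\in\GL_{\ell^k}(q^e)$ with a single nontrivial eigenvalue (so that $C_{\tilde G}(\tilde s\tilde t)\cong\GL_{\ell^k-1}(q^e)\times(q^e-1)$ has index divisible by $\ell^k$, and the determinant of $\tilde t$ is still freely prescribable), with a further ad~hoc analysis when $\ell^k=2$ and $q^e\equiv\pm3\pmod8$; for $\ell=2$ and $n_j\le3$ one argues via the explicit index computation that the passage from $\tilde G$ to $G^*$ does not lose the factor of~$2$.

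A second, smaller gap: from $|C_{\tilde G^*}(\tilde s):C_{\tilde G^*}(\tilde s\tilde t)|_\ell>1$ you jump to $|C_{G^*}(s):C_{G^*}(st)|_\ell>1$ without justification. In general, passing from $\GL_n(q)$ to $\PGL_n(q)$ can cost a factor of $(q-1)_\ell$. The paper handles this by using the full strength of Lemma~\ref{lem:GLn}, namely $|C_{\tilde G}(\tilde s):C_{\tilde G}(\tilde s\tilde t)|_\ell>(q-1)_\ell$ in the generic case, which survives the descent to $G^*$ since $|\tilde G|_\ell/|G^*|_\ell=(q-1)_\ell$. You only record ``$>1$'' at the $\GL$-level, which is not enough.
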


\begin{proof}
Let $G=\SL_n(q)$ and $\la$ be an irreducible character of $Z(G)$ of
$\ell$-power order. (So in particular $\ell$ divides $\gcd(n,q-1)$). Let
$N\unlhd Z(G)$ with $N_\ell=\ker(\la)$ and consider a block $B$ of $G/N$,
with defect group $D$. We may assume that $D$ is non-abelian. Let $\hat B$ be
the block of $G$ containing $B$, with defect group $\hat D$ such that
$D=\hat D/N_\ell$, and $\tilde B$ a block of $\tilde G=\GL_n(q)$ covering
$\hat B$, with defect group $\tilde D\ge\hat D$. Then by the main result of
\cite{FS82} there exists a semisimple $\ell'$-element $\tilde s\in\GL_n(q)$ such
that $\Irr(\tilde B)=\cE_\ell(\tilde G,\tilde s)$, and $\tilde D$ is conjugate
to a Sylow $\ell$-subgroup of $C_{\tilde G}(\tilde s)$. We then have
that $\Irr(B)\subseteq\Irr(\hat B)\subseteq\cE_\ell(G,s)$, where $s$ is the
image of $\tilde s$ in $G^*=\PGL_n(q)$. Now
$$C_{\tilde G}(\tilde s)\cong
  \GL_{n_1}(q^{e_1})\times\cdots\times\GL_{n_r}(q^{e_r})$$
for suitable $n_j,e_j$ with $\sum_j n_je_j=n$. As $D$ is non-abelian, so is
$\tilde D$, so $n_j\ge\ell$ for at least one $j$. Then by Lemma~\ref{lem:GLn}
there is an $\ell$-element $\tilde t\in C_{\tilde G}(\tilde s)$ with
$|C_{\tilde G}(\tilde s):C_{\tilde G}(\tilde s\tilde t)|_\ell>(q-1)_\ell$,
unless $n_j\le3$ and $\ell=2$. Moreover, unless $r=1$ and $n_1=\ell^k$ for
some $k$, we can arrange for $\tilde t$ to have any prescribed determinant
of $\ell$-power order. Let $t$ be the images of $\tilde t$ in $G^*$. Now
choose the determinant of $\tilde t$ such that $\cE(G,st)$ lies above $\la$.
As $|\tilde G|_\ell/|G^*|_\ell=(q-1)_\ell$ we have that
$|C_{G^*}(s):C_{G^*}(st)|_\ell>1$. Thus, by Corollary~\ref{cor:one block} we
have that $\Irr(B|\la)$ contains characters of positive height, as claimed.
\par
It remains to discuss the exceptions, viz.~either $n_j\le3$ ($1\le j\le r$)
and $\ell=2$, or $r=1$ and $n_1=\ell^k$. First assume that $\ell=2$ and
$n_j\le3$ ($1\le j\le r$). Then again by Lemma~\ref{lem:GLn} we find a
suitable $\ell$-element unless $n_2=\ldots=n_r=1$. If $n_1=3$, or if
$n_1=2$ and $n\ne n_1$ then
$|C_{\tilde G}(\tilde s):C_{\tilde G}(\tilde s\tilde t)|_2
 =|C_{G}(s):C_{G}(s t)|_2$ and we are again done.
\par
So we are only left with the second case, namely that $r=1$ and $n_1=\ell^k$.
Then $C_{\tilde G}(\tilde s)\cong\GL_{\ell^k}(q^e)$ with $\ell^ke=n$. Here, let
$\tilde t$ be an $\ell$-element of $\GL_{\ell^k}(q^e)$ having just one
non-trivial eigenvalue $z$. This can be chosen such that $\tilde t$ has any
prescribed determinant viewed as element of $\GL_n(q)$. Then
$C_{\tilde G}(\tilde s\tilde t)\cong\GL_{\ell^k-1}(q^e)(q^e-1)$ has index
divisible by $\ell^k$ in $C_{\tilde G}(\tilde s)$, and this index remains
unchanged when passing to $G^*=\PGL_n(q)$, unless possibly when $\ell^k=2$,
and we are done as before. So finally assume that $\ell=2$ and
$C_{\tilde G}(\tilde s)\cong\GL_2(q^e)$ with $n=2e$. If $q^e\equiv\pm1\pmod8$
then there are 2-elements in $\PGL_2(q^e)$ of order at least~4 in any
non-trivial coset of $\PSL_2(q)$ not centralised by a Sylow 2-subgroup, and
we may again conclude by Corollary~\ref{cor:one block}. Finally, if
$q^e\equiv\pm3\pmod8$, then $e$ is odd, and $\gcd(n,q-1)=\gcd(2,q^e-1)=2$.
So $|Z|=2$, and $D$ is a Sylow 2-subgroup of $\PGL_2(q^e)$ of order~8. But
then $D/Z$ is abelian.
\par
The case of $G=\SU_n(q)$ is entirely similar.
\end{proof}

For groups of classical type we will need the following:

\begin{lem}   \label{lem:2-syl}
 Let $\bG$ be connected reductive in odd characteristic with a Frobenius map
 $F:\bG\rightarrow\bG$ with respect to an $\FF_q$-rational structure and set
 $G=\bG^F$. If $\bG$ is not a torus, then any coset of $G/[G,G]$ of 2-power
 order contains a 2-element that is not centralised by a Sylow 2-subgroup
 of $G$.
\end{lem}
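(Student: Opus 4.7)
The plan is to construct, given any 2-power-order coset $c = g[G,G]$ in $G$, a 2-element $t \in c$ with $|G:C_G(t)|_2 > 1$. First I reduce to the case that $g$ is itself a 2-element: writing $g = g_2 g_{2'}$ as commuting $2$- and $2'$-parts, the image $g_{2'}[G,G]$ in $G/[G,G]$ has odd order while $c$ has 2-power order, so $g_{2'} \in [G,G]$ and hence $g_2 \in c$.

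Now suppose $g$ is a 2-element. If $|G:C_G(g)|_2 > 1$ I simply take $t = g$. Otherwise, some Sylow 2-subgroup $S$ of $G$ centralises $g$, and I construct $t = gh$ with $h \in [G,G]$ a 2-element commuting with $g$ and satisfying $|G:C_G(h)|_2 > 1$; then $C_G(t) \le C_G(h)$ yields the required bound. A key preliminary observation is that since $\bG$ is not a torus and $q$ is odd, any $A_1$-subsystem of $\bG$ contributes a finite subgroup of $G$ of type $A_1$ with dihedral or generalised quaternion (hence non-abelian) Sylow 2-subgroup, so the Sylow 2-subgroup of $G$ itself is non-abelian. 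This rules out the possibility that the connected centraliser $C_{\bG}(g)^\circ$ is a torus: were this so, $C_G(g)$ would be essentially an abelian torus $\bT^F$, modified only by a finite component group embedding into the Weyl group, contradicting that $S$ is non-abelian.

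Thus $C_{\bG}(g)^\circ$ admits some root $\alpha$ of $\bG$ with $\alpha(g) = 1$. Taking an $F$-orbit of such roots and the corresponding subgroup $\bL$ generated by the root subgroups $\mathbf{U}_{F^i\alpha}$ and $\mathbf{U}_{-F^i\alpha}$ yields an $F$-stable rank-one subgroup of $\bG$ contained in $C_{\bG}(g)^\circ \cap [\bG,\bG]$ (root subgroups lie in the derived group). Its fixed-point group $\bL^F$ is of Lie type $A_1$, typically $\SL_2(q^k)$ or $\SU_2(q^k)$. The derived subgroup $[\bL^F, \bL^F]$ lies in $[G,G]$ and has non-abelian Sylow 2-subgroup, so contains a non-central 2-element $h$; this $h$ commutes with $g$, lies in $[G,G]$, and is not centralised by the Sylow 2-subgroup of $\bL^F$, hence not by any Sylow 2-subgroup of $G$ containing it.

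The main obstacle is handling small-rank or small-$q$ exceptions, most notably when $\bL^F \cong \SL_2(3)$, where $[\bL^F,\bL^F] = Q_8$ is still non-abelian and supplies the required non-central 2-element. One must also verify that the component group $C_{\bG}(g)/C_{\bG}(g)^\circ$ (a subgroup of the Weyl group) is not large enough to invalidate the torus-exclusion argument; this reduces to a finite check since $|C_{\bG}(g)/C_{\bG}(g)^\circ|$ divides $|Z(\bG_{\mathrm{sc}})|$. These finitely many exceptional cases admit direct verification using the explicit Sylow 2-subgroup structure of rank-one groups of Lie type.
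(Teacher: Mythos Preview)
Your approach diverges significantly from the paper's, and it contains a genuine gap at the key step. You assert that for $t=gh$ with $g,h$ commuting $2$-elements one has $C_G(t)\le C_G(h)$, but this is false in general: if $x$ centralises $gh$ then $(xgx^{-1})(xhx^{-1})=gh$, which does not force $xhx^{-1}=h$ unless $g$ is central in $G$ (not merely in some Sylow $2$-subgroup). You only know that \emph{some} Sylow $S$ centralises $g$; a Sylow $S'$ centralising $t$ need not equal $S$, so you cannot cancel $g$. Relatedly, the inference ``$h$ is non-central in a Sylow $2$-subgroup of $\bL^F$, hence not centralised by any Sylow $2$-subgroup of $G$'' is unjustified: the centraliser of $h$ in $G$ can be much larger than in $\bL^F$, and $S'\cap\bL^F$ need not be Sylow in $\bL^F$. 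Your torus-exclusion argument also needs tightening, since $C_{\bG}(g)$ can be non-abelian even when $C_{\bG}(g)^\circ$ is a torus (the component group may act non-trivially).

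The paper avoids all of this by working inside a single, well-chosen maximal torus. It picks $\bT$ so that $N_G(\bT)$ contains a Sylow $2$-subgroup $S$ (the maximally split torus when $q\equiv1\pmod4$, the centraliser of a Sylow $\Phi_2$-torus when $q\equiv3\pmod4$), observes that a fundamental reflection $\dot s$ acts non-trivially on $S\cap T$, and hence finds elements of $(T\cap S)\setminus Z(S)$ in every relevant coset. The crucial point you are missing is then supplied by \emph{fusion control}: since $N_G(\bT)$ controls $G$-fusion in $T$, no element of $T\setminus Z(S)$ can be $G$-conjugate into $Z(S)$, and therefore cannot be centralised by \emph{any} Sylow $2$-subgroup of $G$. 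This global fusion statement is exactly what your local $A_1$-subgroup argument lacks.
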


\begin{proof}
First assume that $q\equiv1\pmod4$. Let $\bT\le\bG$ be a maximally split
torus. Then $N_G(\bT)$ contains a Sylow 2-subgroup $S$ of $G$ (see
e.g.~\cite[Prop.~5.20]{MaH0}), and $N_G(\bT)/T\cong W^F$, where $W$ is the
Weyl group of $\bG$. Let $s\in W^F$ be a fundamental reflection with preimage
$\dot s\in N_G(\bT)$ and $H\le G$ the $A_1$-type subgroup corresponding to $s$.
Then $\dot s$ acts non-trivially on $S\cap T$ and
we thus find elements in $(T\cap S)\setminus Z(S)$ in any coset $G/[G,G]$ of
2-power order. Now $N_G(\bT)$ controls fusion in $T$ \cite[Prop.~5.11]{MaH0},
so no such element is conjugate to an element of $Z(S)$, and hence cannot be
centralised by a Sylow 2-subgroup of $G$.\par
If $q\equiv3\pmod4$, let $\bT\le\bG$ be the centraliser of a $\Phi_2$-torus
of $\bG$. Then we may argue as above, except that $W^F$ has to be replaced
by the reflection group $C_W(w_0F)$ where $w_0\in W$ is the longest element.
\end{proof}

\begin{thm}   \label{thm:lie}
 Let $G$ be a covering group of a simple group of Lie type. Then no
 $\ell$-block of $G$ is a minimal counter-example to the ``only if'' direction
 of Conjecture~A.
\end{thm}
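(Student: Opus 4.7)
The plan is to combine the reductions of Section~\ref{sec:only if} with a case-by-case application of the Fong--Srinivasan-type machinery from Lemma~\ref{lem:ell-element} and Corollary~\ref{cor:one block}, closely following the blueprint of Proposition~\ref{prop:type A}. First I would collect the available reductions: by Proposition~\ref{prop:spor} I may exclude the exceptional covers, and by Proposition~\ref{prop:type A} groups of type~$A$; by the discussion preceding Lemma~\ref{lem:ell-element} the underlying characteristic may be assumed different from~$\ell$; and by the opening remarks of Section~\ref{sec:only if} together with Theorem~\ref{thm:p-solvable}(c), I may assume $Z=Z(G)_\ell\ne 1$, $\la\in\Irr(Z)$ faithful, $D/Z$ non-abelian, and $B$ not of maximal defect. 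The only cases that survive are classical types $B$, $C$, $D$, ${}^2D$ at $\ell=2$; type $E_7$ at $\ell=2$; and types $E_6$, ${}^2E_6$ at $\ell=3$, each in cross-characteristic.

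Next I would invoke the block theory. Passing to a regular embedding $\tilde G\supseteq G$ with connected centre, Fong--Srinivasan for the classical groups together with the extensions by Enguehard and Cabanes--Enguehard for the exceptional groups supply a semisimple $\ell'$-element $\tilde s\in\tilde G^*$ with $\Irr(\tilde B)\subseteq\cE_\ell(\tilde G,\tilde s)$ for the block $\tilde B$ of $\tilde G$ covering $B$, and a defect group $\tilde D\supseteq D$ conjugate to a Sylow $\ell$-subgroup of $C_{\tilde G^*}(\tilde s)$. Non-abelianness of $\tilde D$ then guarantees that $C_{\tilde G^*}(\tilde s)^\circ$ is not a torus.

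The heart of the argument is to produce, for the image $s\in G^*$, an $\ell$-element $t\in C_{G^*}(s)$ such that $|C_{G^*}(s):C_{G^*}(st)|_\ell>1$ and $st\in[G^*,G^*]\cdot f(z)$ for some $z\in Z$ with $\la(z)\ne 1$, in the notation of Proposition~\ref{prop:faithful}. Once such a $t$ is found, Lemma~\ref{lem:ell-element} yields characters in $\cE(G,st)$ of positive height, and Proposition~\ref{prop:faithful} places them over~$\la$. For the classical types and $E_7$, all at $\ell=2$ in odd characteristic, this is precisely what Lemma~\ref{lem:2-syl} applied inside the connected reductive group $C_{\tilde G^*}(\tilde s)^\circ$ provides, once one chooses the 2-power-order coset of its commutator to map to $f(z)$ under the natural quotient. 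For $E_6$ and ${}^2E_6$ at $\ell=3$, the possible centralizer types $C_{\tilde G^*}(\tilde s)$ with non-abelian Sylow 3-subgroup form a short list, and a direct case-by-case inspection produces the desired $t$, in the spirit of the endgame of Proposition~\ref{prop:type A}.

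The main obstacle I expect is the careful descent from $\tilde G$ back to $G$: the centralizer $C_{G^*}(s)$ may fail to be connected, and both the coset condition on $st$ and the strict $\ell$-divisibility of $|C_{G^*}(s):C_{G^*}(st)|$ must be tracked precisely through the quotient $\tilde G^*\to G^*$. The second half of Proposition~\ref{prop:type A} illustrates how delicate this can already become in type~$A$, and an analogous type-by-type fine-tuning will be required here, especially for $E_6$ with $\ell=3$, where the isogeny between simply connected and adjoint forms interacts with the prime~$\ell$.
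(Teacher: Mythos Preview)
Your reductions and the classical-group strategy are essentially those of the paper, though the paper avoids the regular-embedding detour for types $B_n,C_n,D_n$: it invokes \cite[Prop.~1.5]{En08} directly for the simply connected group $G$, so that $\Irr(B)=\cE_2(G,s)$ is already a \emph{single} $2$-block and Corollary~\ref{cor:one block} together with Lemma~\ref{lem:2-syl} applies in $G^*$ without any descent from $\tilde G$. The obstacle you flag as ``main'' is thus bypassed rather than confronted.

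For the exceptional types your outline has a genuine gap. You propose to treat $E_7$ at $\ell=2$ uniformly via Lemma~\ref{lem:2-syl} and Corollary~\ref{cor:one block}, but at the bad prime $\ell=2$ the union $\cE_2(G,s)$ need not be a single block: already for $s=1$ there are three unipotent $2$-blocks with non-abelian defect, two of which are not of maximal defect and so survive your reductions. Finding a $2$-element $t$ with smaller centraliser places the characters of $\cE(G,t)$ at positive height relative to the semisimple character in $\cE(G,s)$, but gives no control over which of the three blocks they lie in; Lemma~\ref{lem:ell-element} requires $\chi$ and $\chi'$ to be in the \emph{same} block. The paper resolves this by using the explicit block--Harish-Chandra series correspondence from \cite{En00} to pick $t$ so that the relevant constituents of $\cE(G,t)$ land in the specified $E_6$-series block.

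The second omission is the treatment of non-quasi-isolated blocks of $E_6(q)_\SC$, $\tw2E_6(q)_\SC$ and $E_7(q)_\SC$. Your ``short list'' of centraliser types for $E_6$ at $\ell=3$ is only short if one first reduces to quasi-isolated $s$; the paper does this via the Bonnaf\'e--Dat--Rouquier Morita equivalence \cite{BDR}, which carries $B$ height-preservingly to a block of a proper Levi subgroup with isomorphic defect group, and then invokes the minimality hypothesis together with \cite{KM13,KM17}. Without this step the non-quasi-isolated blocks remain unaddressed, and neither Lemma~\ref{lem:2-syl} nor a direct enumeration handles them cleanly.
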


\begin{proof}
As pointed out above, we may assume by Proposition~\ref{prop:spor} that $G$ is
not an exceptional covering group of $G/Z(G)$ and that thus $\ell$ is not the
underlying characteristic of $G$. As we only need to consider the case
when $\ell$ divides $|Z(G)|$, the group $G$ is not a Suzuki or Ree group.
But then we may assume that $G=\bG^F$ as above. Furthermore, $G$ is not of type
$A$ by Proposition~\ref{prop:type A}. Then we have $\ell\le3$.
\par
Let us first assume that $\bG$ is of classical type $B_n$, $C_n$ or $D_n$.
Then $|Z(\bG)|$ is a 2-power, so here $\ell=2$. Let $B$ be a 2-block of $G$.
Then by Enguehard \cite[Prop.~1.5]{En08} there is a semisimple element
$s\in G^*$ of odd
order such that $\Irr(B)=\cE_2(G,s)$ is a Brou\'e--Michel union of Lusztig
series, where $\bG^*$ denotes a group in duality with $\bG$. Furthermore, if
$\bL\le\bG$ denotes a Levi subgroup in duality with $C_{\bG^*}(s)$ (which is
connected as $s$ has odd order) then any Sylow 2-subgroup of $L=\bL^F$ is a
defect group of $B$. By Lemma~\ref{lem:2-syl} any coset of $[G^*,G^*]$ contains
a 2-element $t\in C_{G^*}(s)$ such that $|C_{G^*}(st)|_2<|C_{G^*}(s)|_2$. Then
by Corollary~\ref{cor:one block} the characters in
$\cE(G,st)\subseteq\cE_2(G,s)$ have positive height, and by
Proposition~\ref{prop:faithful} they are faithful on a chosen cyclic subgroup
of the centre $Z(G)$.
\par
Next assume that $G$ is of exceptional type. Then our assumption that
$Z(G)\ne1$ implies that either $G=E_6(q)_\SC$ or $\tw2E_6(q)_\SC$ with
$\ell=3$, or $G=E_7(q)_\SC$ with $\ell=2$. First assume that $G=E_6(q)_\SC$
with $\ell=3$ and $Z(G)\ne1$, so $3|(q-1)$. The unipotent 3-blocks of $G$ and
their non-unipotent constituents are described in
\cite[p.~351 and Thm.~B]{En00}. By Theorem~\ref{thm:p-solvable}(c) we need not
consider the principal 3-block. The only other unipotent 3-block $B$ of $G$ has
defect group $D$ an extension of a homocyclic abelian group $3^a\times 3^a$,
with $3^a$ the precise power of $3$ dividing $q-1$, by a cyclic group of
order~3. Let $t\in G^*$ be an element of order~3 with centraliser of type
$\tw3D_4(q).3$,
not contained in the derived subgroup of $G^*$. Then the cuspidal character
$\chi'$ in the Lusztig series $\cE(G,t)$ is faithful on $Z(G)$, it lies in
$\Irr(B)$ by \cite[Prop.~17]{En00} and it is of positive height by
Lemma~\ref{lem:ell-element}. By \cite[Tab.~3]{KM13} the only further
quasi-isolated 3-block consists of the characters in $\cE_3(G,s)$ for an
involution $s\in G^*$ with centraliser $A_5(q)A_1(q)$. Let $t$ be a 3-element
in $C_{G^*}(s)$ outside $[G^*,G^*]$ with centraliser $A_4(q).A_1(q).\Ph1$,
then we may conclude using Corollary~\ref{cor:one block}. We postpone the
discussion of non-quasi-isolated 3-blocks to the end.
\par
The case of $\tw2E_6(q)_\SC$ with $\ell=3$ and $q\equiv2\pmod3$ is entirely
similar. The relevant data are collected in Table~\ref{tab:exc}.

\begin{table}[ht]
\caption{Characters of positive height}   \label{tab:exc}
$$\begin{array}{cc|cccll}
 G&  \ell&  C_{G^*}(s)& \text{HC}& C_{G^*}(st)& \chi(1)_\ell& \chi'(1)_\ell\\
 \noalign{\hrule}\noalign{\hrule}
 E_6(q)_\SC& 3& G^*& D_4& \tw3D_4(q).\Ph3.3& 3^{4a+3}& 3^{6a+2}\\
 (q\equiv1\ (3))& & A_5(q).A_1(q)& \emptyset& A_4(q).A_1(q).\Ph1& 3^2& 3^3\\
 \noalign{\hrule}
      \tw2E_6(q)_\SC& 3& G^*& D_4& \tw3D_4(q).\Ph6.3& 3^{4a+3}& 3^{6a+2}\\
 (q\equiv2\ (3))& & \tw2A_5(q).A_1(q)& \emptyset& \tw2A_4(q).A_1(q).\Ph2& 3^2& 3^3\\
 \noalign{\hrule}
          E_7(q)_\SC& 2& G^*& E_6[\theta]& E_6(q).\Ph1& 3^{6a+9}& 3^{6a+10}\\
 (q\equiv1\ (4))& & A_5(q).A_2(q)& \emptyset& A_2(q)^3.\Ph1.2& 3^5& 3^6\\
                & & \tw2A_5(q).\tw2A_2(q)& \emptyset& \tw2A_2(q)^3.\Ph2.2& 3^{3a+2}& 3^{4a+2}\\
 \noalign{\hrule}
          E_7(q)_\SC& 2& G^*& \tw2E_6[\theta]& \tw2E_6(q).\Ph2& 3^{6a+9}& 3^{6a+10}\\
 (q\equiv3\ (4))& & A_5(q).A_2(q)& \emptyset& A_2(q)^3.\Ph1.2& 3^{3a+2}& 3^{4a+2}\\
                &  & \tw2A_5(q).\tw2A_2(q)& \emptyset& \tw2A_2(q)^3.\Ph2.2& 3^5& 3^6\\
 \noalign{\hrule}
\end{array}$$
\end{table}

Next assume that $G=E_7(q)_\SC$ with $\ell=2$, so $q$ is odd. First assume that
$q\equiv1\pmod4$. By \cite[p.~354]{En00} there are three unipotent 2-blocks of
$G$ with non-abelian defect, the principal block and two further ones
corresponding to Harish-Chandra series of type $E_6$. Here let $t\in G^*$ be a
2-element with centraliser $E_6(q)(q-1)$ of order $(q-1)_2$. This is not
contained in the derived subgroup $[G^*,G^*]$, and by order comparison it does
not centralise a Sylow 2-subgroup of $G^*$, so by Lemma~\ref{lem:ell-element}
the elements in $\cE(G,t)$ in the $E_6$-Harish-Chandra series provide faithful
characters in the corresponding unipotent blocks of positive height.
By \cite[Tab.~4]{KM13} the only further quasi-isolated 2-blocks of $G$ consist
of the characters in $\cE_2(G,s)$ for elements $s\in G^*$ of order~3 with
centraliser of type $A_5+A_2$. For $q\equiv1\pmod3$, this has rational
structure $A_5(q).A_2(q)$. Let $t$ be a 2-element in $C_{G^*}(s)$ outside
$[G^*,G^*]$ with centraliser $A_2(q)^3.\Ph1$, then we may conclude by
applying Corollary~\ref{cor:one block}. When $q\equiv2\pmod3$, the centraliser
of $s$ has rational structure $\tw2A_5(q).\tw2A_2(q)$, and here we may take
a 2-element $t$ in $C_{G^*}(s)$ with centraliser $\tw2A_2(q)^3.\Ph2$.
The case when $q\equiv3\pmod4$ is entirely similar, again see
Table~\ref{tab:exc}.
\par
Now assume that $B$ is a non-quasi-isolated 3-block of $E_6(q)_\SC$,
$\tw2E_6(q)_\SC$, or a non-quasi-isolated 2-block of $E_7(q)_\SC$. Then
by the main result of Bonnaf\'e--Dat--Rouquier \cite{BDR}, there is a Morita
equivalence  of $B$ to a block $b$ of a proper Levi subgroup $L$ of $G$,
induced by the height preserving bijection $\Irr(B)\rightarrow\Irr(b)$ coming
from Jordan decomposition, and the defect group $D_b$ of $b$ is isomorphic
to~$D$. As we assume that $D/Z$ is non-abelian, by \cite{KM17} there exist
characters of positive height in the block $\bar B$ of $G/Z$ contained in $B$.
But then the block $\bar b$ of $L/Z$ contained in $b$ also has characters of
positive height, whence $D/Z$ is non-abelian by the proven direction \cite{KM13}
of the ordinary height zero conjecture.
Thus, $B$ is certainly not a minimal counter-example to Conjecture~A.
\end{proof}

We have completed the proof of Theorem~\ref{thm:conj A}.


\end{document}